\newcommand{\ds}{\displaystyle}
\newcommand{\Ess}{\emph{Ess}}
\begin{document}

\renewcommand{\PaperNumber}{***}

\FirstPageHeading

\ShortArticleName{On a class of Leibniz Algebras}

\ArticleName{ON A CLASS OF LEIBNIZ ALGEBRAS}
% Names of the authors for the title of the paper
\Author{C\^ome J.A.B\'ER\'E~$^\dag$,  Aslao KOBMBAYE~$^\ddag$ and Amidou KONKOBO~$^\dag$}

\AuthorNameForHeading{C\^ome B\'er\'e et al.}

\Address{$^\dag$~\footnotesize \it Laboratory T.N. AGATA \\Department of Mathematics and Computer Science\\ %
University of Ouagadougou Burkina Faso, Address 03 B.P. 7021 Ouagadougou 03 Burkina Faso.}%\\%} % Address of First Author
\EmailD{\href{mailto:bere_jean0@yahoo.fr}{come\_bere@univ-ouaga.bf \& amidoukonkobo@univ-ouaga.bf}} % 

% Address of Second Author
\Address{$^\ddag$~\footnotesize \it Laboratory T.N. AGATA  \\Department of Mathematics\\ %and Computer Science\\ %
University of Djamena Tchad, Address B.P. 1027 Djamena Tchad.}
\EmailD{\href{mailto:asljoskob@yahoo.fr}{asljoskob@yahoo.fr}} % E-mail address of Second Author

\ArticleDates{Received ???, in final form ????; Published online ????}

\Abstract{We pointed out the class of Leibniz algebras such that the Killing form is non degenerate implies algebra is semisimple.}

\Keywords{Leibniz algebras; Leibniz modules; Representations; Killing form.}

\Classification{17A32; 17B30.} % 

\section{Introduction}

Throughout this paper, $F$ will be an algebraically closed field
of characteristic zero. All vector spaces and algebras will be finite
dimensional over $F$.   
Note the sum of two vector subspaces
$V_1,V_2$ by $V_1\dot{+}V_2$ and direct sum by $V_1\oplus V_2$.
It  is well-known that  a Lie algebra is semisimple if and only if its Killing form is non degenerate. An equivalent criterion is found 
for Leibniz algebra $L$ which satisfies, for all $x, y$ in $L$,  the trace of endomorphism $(ad_x\circ ad_y)_{|\Ess(L)}$ equals zero. %=\{0\}$ for some integer $k$. 
Call such algebras "Killing- Leibniz-Algebra".

Section \ref{sec2} is devoted to basic facts. %reminders on definitions and general results.
In Section \ref{sec3},    
the links between radical and nilradical are set. %
Section \ref{sec4} is devoted to the nilpotency of the ideal $\left\{Rad(L),L\right\}$.
In Section \ref{sec5}, the main theorem is settled.  
For conclusion, we give an hierarchy of Leibniz algebras  and two questions 
%
%Equivalent of Weyl's theorem on complete reducibility 
are  done about Killing Leibniz Algebras. %\ref{sec6}.  

\section{Basics facts.}\label{sec2}

Let us note that Leibniz algebras are defined in two classes:\\
\begin{itemize}
 \item Right Leibniz algebras, with the rule 
\begin{equation}\label{jacobi}
[x,[y,z]]=[[x,y],z]-[[x,z],y]\textnormal{ for any }x,y,z\in L.
\end{equation}
 \item Left Leibniz algebras, with the rule 
\begin{equation} 
[x,[y,z]]=[[x,y],z]+[y,[x,z]]\textnormal{ for any }x,y,z\in L.
\end{equation}
\end{itemize}
For an algebra $(A, [\,,\,])$ with vectors multiplication $[a,b]$, for all $a$, $b$ in $A$, define the algebra $(A, [\,,\,]^{op})$  as the underlying vector space $A$ where the  vectors multiplication is defined by $[a,b]^{op}=[b,a]$.
We have that:\\
%\indent\textbf{Proposition:} 
\begin{proposition}
The algebra $(A, [\,,\,])$ is left Leibniz algebra if and only if the algebra $(A, [\,,\,]^{op})$ is right Leibniz algebra.
\end{proposition}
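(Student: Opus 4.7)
The plan is a direct, purely symbolic verification. I would unfold the right Leibniz identity for $(A, [\,,\,]^{op})$ using only the defining relation $[a,b]^{op}=[b,a]$, translate every occurrence of $[\,,\,]^{op}$ back into the original bracket, and recognize the resulting identity, after a relabeling of the dummy variables, as precisely the left Leibniz identity \eqref{jacobi}-style for $(A,[\,,\,])$. Since each substitution is an equality rather than an implication, the forward and backward directions come out of the same computation, so there is no need to argue them separately.

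Concretely, I would fix $x,y,z\in A$ and write that $(A,[\,,\,]^{op})$ satisfies the right Leibniz identity on $(x,y,z)$ if and only if
\[
[x,[y,z]^{op}]^{op}=[[x,y]^{op},z]^{op}-[[x,z]^{op},y]^{op}.
\]
Applying $[a,b]^{op}=[b,a]$ to each bracket (being careful to apply it to the inner bracket \emph{before} the outer one), the left hand side becomes $[[z,y],x]$, and the two terms on the right become $[z,[y,x]]$ and $[y,[z,x]]$ respectively. The identity therefore reduces to
\[
[z,[y,x]]=[[z,y],x]+[y,[z,x]],
\]
which, after renaming $(z,y,x)\mapsto(x,y,z)$, is exactly the left Leibniz identity for $(A,[\,,\,])$. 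Thus $(A,[\,,\,]^{op})$ is right Leibniz iff $(A,[\,,\,])$ is left Leibniz.

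Since the same chain of equalities can be read in reverse and the construction $[\,,\,]\mapsto[\,,\,]^{op}$ is an involution, nothing extra is needed for the converse; it is really the same statement read backwards. The only step I would be genuinely careful with is the bookkeeping when computing $[x,[y,z]^{op}]^{op}$ and $[[x,y]^{op},z]^{op}$: because the opposite operation has to be unfolded on both the outer and the inner bracket, it is easy to lose track of one of the swaps and arrive at an identity that looks like, but is not quite, the left Leibniz rule. No deeper obstacle is expected, and the argument should be independent of characteristic and of the scalar field.
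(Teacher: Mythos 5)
Your verification is correct: unfolding $[a,b]^{op}=[b,a]$ in the right Leibniz identity for $(A,[\,,\,]^{op})$ yields $[z,[y,x]]=[[z,y],x]+[y,[z,x]]$, which after relabeling is exactly the left Leibniz identity, and since every step is an equivalence quantified over all triples, both directions follow at once. The paper states the proposition without proof, and your direct symbolic check is precisely the standard argument it implicitly relies on, so there is nothing to add.
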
 
So results on Left Leibniz algebras are available on Right Leibniz algebras, (with minors variations). 
% is a hierarchy of algebras

Here we write "Leibniz algebras" for "Right Leibniz algebras".
%%%%%%%%%%%%%%           \begin{definition} {(Leibniz algebra)} %(\cite{Loday}) 

\medskip{}
It follows from the equation (\ref{jacobi}) called Leibniz identity that in any Leibniz algebra one
has 
\[
[y,[x,x]]=0,\,[z,[x,y]]+[z,[y,x]]=0,\textnormal{ for all }x,y,z\in L.
\]
 %\medskip{}
%%%%%%%%%%%%%%%%%%%%%%%%%%%%%%           definition
%%%%%%%%%%%%%%%%%%%%%
%%%%%%%%%%%%%%
\begin{definition} {(Ideal)}\, %definition2
A subspace $H$ of a Leibniz algebra $L$ is called left (respectively
right) ideal if for $a\in H$ and $x\in L$ one has $[x,a]\in H$
(respectively $[a,x]\in H$). If $H$ is both left and right ideal,
then $H$ is called (two-sided) ideal.

If $V$ is a vector space, let $End_F(V)$ denotes the set of all
endomorphisms of $V$. An action of $L$ on $End_F(V)$ is a linear map 
of $L$ on $End_F(V)$.
\end{definition}

%\medskip{}
%%%%%%%%%%%%%%%%%%%%%%%%%%%%%%           definition23
%%%%%%%%%%%%%%%%%%%%%
%%%%%%%%%%%%%%

\begin{definition}{(Representation)}\, %definition3 
Let $L$ be a Leibniz algebra and $V$ a vector space. $V$ is an $L$-module if
there are: 
\begin{itemize}
\item a left action, $l:L\longrightarrow End_F(V),\, x\mapsto l_x$ 
\item a right action, $r:L\longrightarrow End_F(V),\, x\mapsto r{}_{x}$,\\
such that: 
\end{itemize}
\[
\begin{array}{ccl}
r_{[x,y]} & = & r_yr_x-r_xr_y,\\
l_{[x,y]} & = & r_yl_x-l_xr_y,\\
l_{[x,y]} & = & r_yl_x+l_xl_y,%\\
%0 & = & l_xl_y+l_xr_y.
\end{array}
\]
 \label{def13}\end{definition}

For $x$ in $L$, $r_x(v)$ will be denoted by $vx$ and $l_x(v)$
will be denoted by $xv$. The triplet $(l,r,V)$ is called a representation
of $L$ on $V$. %\\%\smallskip{}
Now if $L$ is a Leibniz algebra, we have the adjoint representation
``$(Ad,ad,L)$'' defined as follows: 
for all $x$ and $y$ in $L$, $ad_{x}:L\longrightarrow L$, $y\longmapsto[y,x]$
and $Ad_{x}:L\longrightarrow L$, $y\longmapsto[x,y]$

\begin{remark}~ %\phantom{~}\\%\\
For $x\in L$, $ad_{x}:L\longrightarrow L$ is a derivation of $L$ i.e.
for all $x,y,z$ in $L$,\\  
%\[
$ad_{x}([y,z])=[ad_{x}(y),z]+[y,ad_{x}(z)]$.\\
%\]
 For $x\in L$, $Ad_{x}:L\longrightarrow L$ is an anti-derivation of $L$
i.e. for all $x,y,z$ in $L$,\\ 
%\[
$Ad_{x}([y,z])=[Ad_{x}(y),z]-[Ad_{x}(z),y]$.
%\]

\label{rmq2}\end{remark}
%%%%%%%%%%%%%%%%%%%%%%%%%%%%%%
%%%%%%%%%%%%%%%%%    
%%%%%%% 

For an arbitrary algebra and for all non negative integer $n$ let
us define the sequences:
\begin{description} 
\item[(i)] $D^{1}\left(L\right)=L^{[1]}=L^{2}$, $D^{n+1}\left(L\right)=L^{[n+1]}=[L^{[n]},L^{[n]}]$;
\item[(ii)] $L^{1}=L$, $L^{n+1}=[L^{1},L^{n}]+[L^{2},L^{n-1}]+\cdots+[L^{n-1},L^{2}]+[L^{n},L^{1}]$.
\end{description}

\begin{definition}%%%%%%%%%%%%%%%%%%%%%%%%%%%%%%%%%%%%%%%%%%%%%%%%%%%%%%%%%%%%%%%%%%%%%%%%%%%%%%%
(\cite{Albeverio+})\par % Omirov+}%definition1.1.5 \\
An algebra $L$ is called solvable if there exists $m\in\mathbb{N^{*}}$
such that $D^{m}\left(L\right)=L^{[m]}=\{0\}$.\\
An algebra $L$ is called nilpotent if there exists $m\in\mathbb{N^{*}}$
such that $L^{m}=\{0\}$.
\end{definition}

\begin{definition}%definition1.1.6
Let $A$ be a subspace of a Leibniz algebra $L$. 
The normalizer of $A$ is denoted by : 
\[
n_{L}(A)=\left\{ y\in L,\left[y,a\right]\in A\textnormal{ and }\left[a,y\right]\in A\right\}. 
\] 
\end{definition} %

\begin{definition}%%%%%%%%%%%%%%%%%%%%%%%%%%%%%%%%%%%%%%%%%%%%%%%%%%%%%%%%%%%%%%%%%%%%%%%%%%%%%%%
(\cite{demirs})\par % Omirov+}%definition1.1.5 \\Definition 5.1. 
A Leibniz algebra $L$ is said to be semisimple if $Rad(L) =\Ess(L)$.
%$\left\{0\right\}$, $\Ess(A)$,
%$A$ are the only ideals of $A$.
\end{definition} %

Equivalently, we can say that :

%\begin{itemize}
 %\item 
% We shall call
  Leibniz algebra $L$ semisimple if %$L$ is not abelian,  %it is not abelian
 $\{0\}\neq[L,L]\neq\Ess(L)$ and every ideal  of $L$ belongs to the set $\left\{
%and if it does not have an ideal except the following three:  % has no ideals other than one of the following %chain 
L,\Ess(L),(0)\right\}$. 

%\item 
Since $D\imath=\imath^{2}$ is an ideal whenever $\imath$ is (by Equation \ref{jacobi}
), if $rad\left(L\right)\neq\Ess(L)$ then $L$ contains
an ideal $\jmath$ which satisfies $\jmath^{2}\subseteq\Ess(L)\subsetneq\jmath$. 

So an other equivalent definition is: 
\begin{remark}\label{ari} $L$ is semisimple if it has no ideal $\jmath$
which satisfies $\jmath^{2}\subseteq\Ess(L)\subsetneq\jmath$. %\medskip{}
\label{remq2}\end{remark}
%\end{itemize}

%\

\begin{lemma}\label{Solvabl}\cite{bere12} %lemma1.1.7
Let $L$ be a Leibniz algebra and $\left(l,r,V\right)$
a representation of $L$. Let $A$ be a subspace of $L$, then $r_{A}=\left\{ r_x,\,\textnormal{for\,\ all}\,\ x\in A\right\} $
is a subspace of the vector space $End_{F}\left(V\right)$. In particular,
$r_{L}$ is a Lie subalgebra of $gl\left(V\right)$ and %.\\
 $L$ is solvable (respectively nilpotent) if and only if $r_{L}$
is solvable (respectively nilpotent).
\label{lemma11}
\end{lemma}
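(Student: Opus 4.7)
The statement has three parts: that $r_A$ is a linear subspace, that $r_L$ is a Lie subalgebra of $gl(V)$, and the two series equivalences. The first is immediate since the map $r : L \to End_F(V)$ is linear by the definition of a representation, so the image $r_A$ of the subspace $A$ is automatically a subspace. For the second, I would rearrange the representation axiom $r_{[x,y]} = r_y r_x - r_x r_y$ to read $[r_x, r_y]_{gl(V)} = r_x r_y - r_y r_x = -r_{[x,y]}$, which is again in $r_L$. So $r_L$ is closed under the commutator bracket.

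For the solvability equivalence, the plan is to show by induction on $n$ that $(r_L)^{[n]} = r_{L^{[n]}}$, where the left side is the Lie derived series of $r_L \subseteq gl(V)$ and the right side is the Leibniz derived series of $L$. The base case $(r_L)^{[1]} = [r_L, r_L]_{gl(V)} = r_{[L,L]} = r_{L^{[1]}}$ uses the bracket identity above extended linearly; the inductive step is identical. Then $L^{[m]} = 0$ forces $(r_L)^{[m]} = 0$, and conversely vanishing of $(r_L)^{[m]}$ places $L^{[m]}$ inside $\ker r$.

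For nilpotency I would prove by induction that $(r_L)^n = r_{L^n}$, where $(r_L)^n$ is the Lie-algebra lower central series. The subtle point is that the Leibniz recursion $L^{n+1} = \sum_{i=1}^n [L^i, L^{n+1-i}]$ has many summands, whereas $(r_L)^{n+1} = [r_L, (r_L)^n]_{gl(V)}$ uses only one. Luckily each image $r_{[L^i, L^{n+1-i}]} = [(r_L)^{n+1-i}, (r_L)^i]_{gl(V)}$ lands inside $(r_L)^{n+1}$ by the standard Lie-algebra containment $[(r_L)^i, (r_L)^j] \subseteq (r_L)^{i+j}$, giving $r_{L^{n+1}} \subseteq (r_L)^{n+1}$; and the reverse inclusion comes directly from the single summand $[L, L^n]$ since $(r_L)^{n+1} = [r_L, r_{L^n}]_{gl(V)} = r_{[L, L^n]} \subseteq r_{L^{n+1}}$.

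The main obstacle I anticipate is the converse directions of both equivalences: vanishing of $(r_L)^{[m]}$ or $(r_L)^m$ only gives $L^{[m]} \subseteq \ker r$ or $L^m \subseteq \ker r$, not that these series vanish outright. A direct check from the representation axioms shows that $\ker r$ is a two-sided ideal of $L$, so I would finish by passing to the quotient $L/\ker r$ (on which $r$ becomes faithful) and then arguing — either by a faithfulness hypothesis, or inductively on dimension — that the kernel is itself solvable, respectively nilpotent, so that solvability or nilpotency lifts from $r_L$ to $L$.
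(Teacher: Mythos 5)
Your two identities $r_{x+\lambda y}=r_{x}+\lambda r_{y}$ and $[r_{x},r_{y}]=r_{x}r_{y}-r_{y}r_{x}=-r_{[x,y]}=r_{[y,x]}$ are exactly the entire content of the paper's own proof, which records these and then declares the results clear. Your inductions identifying the two series, $(r_{L})^{[n]}=r_{L^{[n]}}$ and $(r_{L})^{n}=r_{L^{n}}$ (with $[(r_{L})^{i},(r_{L})^{j}]\subseteq(r_{L})^{i+j}$ to absorb the many summands of $L^{n+1}$, and the single summand $[L,L^{n}]$ for the reverse inclusion), are correct and settle the subspace claim, the Lie-subalgebra claim, and the implication ``$L$ solvable (resp.\ nilpotent) $\Rightarrow r_{L}$ solvable (resp.\ nilpotent)'' in full detail; this is a faithful and more complete rendering of what the paper waves through.

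The genuine gap is the converse, and your proposed repair cannot close it. From $(r_{L})^{[m]}=\{0\}$ you only get $L^{[m]}\subseteq\ker(r)$, and $\ker(r)$, although indeed a two-sided ideal, need not be solvable for a general representation: the zero action $l=r=0$ of a simple Lie algebra $L$ on any $V$ satisfies all the axioms of Definition \ref{def13}, and there $r_{L}=\{0\}$ is solvable and nilpotent while $L$ is neither. So no induction on dimension can succeed, and faithfulness is an extra hypothesis you would be adding, not something you can argue; the equivalence as literally stated needs the assumption that $\ker(r)$ is solvable (resp.\ nilpotent). For the adjoint representation the solvable case does go through, because $\ker(ad)=\{x\in L:\,[y,x]=0\ \textnormal{for all}\ y\in L\}$ is an abelian ideal, so solvability of $ad_{L}$ passes to $L/\ker(ad)$ and then to $L$. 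The nilpotent case is more delicate even there: under your reading (``$r_{L}$ nilpotent as a Lie subalgebra of $gl(V)$''), Example \ref{examp} has $ad_{L}$ one-dimensional, hence nilpotent, while $L^{n}=\mathbb{C}x\neq\{0\}$ for all $n\geq 2$; the reading that works is the one in the Remark following the lemma, namely that every $r_{x}$ is a nilpotent endomorphism, combined with Engel's theorem for Leibniz algebras. To be fair, the paper's one-line proof stops exactly where your complete part stops and does not address any of this; but as a self-contained proof of the stated ``if and only if,'' your sketch (like the paper's) is missing the hypothesis on $\ker(r)$, and the finishing step you propose would fail without it.
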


\begin{proof} The results are clear since for all $x$, $y$ in $L$  and for all $\lambda$ in $F$,
we have that\\ $r_{x+\lambda y}=r_x+\lambda r_{y}$ and $\left[r_x,r_{y}\right]=r_{\left[y,x\right]}$.
 \end{proof}

\begin{remark}

Let $L$ be a Leibniz algebra and $\left(l,r,V\right)$
a representation of $L$.
 %a semisimple Leibniz algebra.
If  for all $x$ in $L$, $r_x$ is nilpotent then $l_x$ is also nilpotent for all $x$. Since we have 
$l_x^k=(-1)^{k+1}l_x\left(r_x\right)^{k-1}$. Thus when  $r_x$ is nilpotent  for all $x$ in $L$, we can say that the representation $\left(l,r,V\right)$ of $L$ is nilpotent.
\end{remark}

%\begin{remark}\label{adsol} %lemma1.1.7

\begin{lemma} %lemma1.1.8
Let $L$ be a Leibniz algebra and $\left(l,r,V\right)$ a representation
of $L$. Let $A$ be a subspace of the vector space $L$ and let $x$
in the normalizer $n_{L}(A)$ of $A$. Then we have for all integer
$k$ in $\mathbb{N}$ and for all $a$ in $A$:
\begin{description}
 \item[i) ] $\delta_{k+1}=r_{a}^{k+1}r_{x}-r_{x}r_{a}^{k+1}\in %& %
 r_A^{k+1}$.
 \item[ii)] $\beta_{k+1}=r_{x}^{k+1}r_{a}-r_{a}r_{x}^{k+1}\in %&%
 r_{A}r_{x}^{k}\dot{+}\cdots\dot{+}r_{A}r_{x}\dot{+}r_{A}$.
\end{description}
%]
\end{lemma}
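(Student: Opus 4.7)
The plan is to prove both parts by induction on $k$, using one crucial ingredient: the commutator identity
\[
[r_u,r_v]=r_u r_v - r_v r_u = r_{[v,u]},
\]
established in the proof of Lemma~\ref{lemma11}, together with the normalizer hypothesis. Since $x\in n_L(A)$, for every $c\in A$ both $[x,c]$ and $[c,x]$ belong to $A$, hence $r_{[x,c]}$ and $r_{[c,x]}$ lie in $r_A$. This is what will allow $r_x$ to be shuffled past factors coming from $r_A$ without leaving the spaces on the right-hand sides.

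For part (i), the base case $k=0$ is immediate: $\delta_1=r_a r_x - r_x r_a = r_{[x,a]}\in r_A = r_A^{1}$. For the inductive step I would write
\[
r_a^{k+1}r_x = r_a\bigl(r_a^{k}r_x\bigr) = r_a\bigl(r_x r_a^{k}+\delta_k\bigr),
\]
then substitute $r_a r_x = r_x r_a + r_{[x,a]}$ to obtain the recursion
\[
\delta_{k+1} = r_{[x,a]}\,r_a^{k} + r_a\,\delta_k.
\]
Both summands clearly sit in $r_A\cdot r_A^{k}\subseteq r_A^{k+1}$: the first because $[x,a]\in A$, the second by the inductive hypothesis.

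For part (ii) the situation is reversed: $r_x$ starts on the left and must be pushed past every factor in $r_A$. The base case $k=0$ gives $\beta_1 = r_x r_a - r_a r_x \in r_A$ directly. For the inductive step I would expand
\[
r_x^{k+1}r_a = r_x^{k}(r_x r_a) = r_x^{k}\bigl(r_a r_x - r_{[x,a]}\bigr)
\]
and apply the inductive hypothesis to $r_x^{k}r_a$, which yields the recursion
\[
\beta_{k+1} = \beta_k\,r_x - r_x^{k}\,r_{[x,a]}.
\]
The term $\beta_k r_x$ lies in $r_A r_x^{k}\dot{+}\cdots\dot{+}r_A r_x$ by induction, so the only thing left to bound is $r_x^{k}r_{[x,a]}$.

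The main obstacle is precisely this last piece: there is still an $r_x$ on the wrong side. I would handle it by a secondary induction showing that the subspace
\[
W_j := r_A r_x^{j}\dot{+}\cdots\dot{+}r_A r_x\dot{+}r_A
\]
satisfies $r_x\cdot W_j\subseteq W_{j+1}$. This follows from the computation
\[
r_x(r_c r_x^{i}) = (r_c r_x + r_{[c,x]})r_x^{i} = r_c r_x^{i+1} + r_{[c,x]}r_x^{i},
\]
together with the normalizer property $[c,x]\in A$ for every $c\in A$. Since $r_{[x,a]}\in W_0$, iterating gives $r_x^{k}r_{[x,a]}\in W_k$, and the induction closes. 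The asymmetric shape of the statement in (ii) reflects exactly this bookkeeping: each time $r_x$ is commuted past a generator of $r_A$, a lower-order term in $r_A$ is created, and the normalizer hypothesis guarantees that none of these by-products escape the prescribed space.
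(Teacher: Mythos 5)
Your proof is correct and follows essentially the same route as the paper: induction on $k$ driven by the identity $r_u r_v - r_v r_u = r_{[v,u]}$ together with the normalizer hypothesis, yielding the same recursions $\delta_{k+1}=r_{[x,a]}r_a^{k}+r_a\delta_k$ and $\beta_{k+1}=\beta_k r_x - r_x^{k}r_{[x,a]}$. The only (harmless) divergence is in part (ii): you absorb the leftover term $r_x^{k}r_{[x,a]}$ via the auxiliary observation $r_x\bigl(r_A r_x^{j}\dot{+}\cdots\dot{+}r_A\bigr)\subseteq r_A r_x^{j+1}\dot{+}\cdots\dot{+}r_A$, whereas the paper handles it by applying the inductive hypothesis once more to the element $[a,x]\in A$; the two devices are equivalent uses of the same normalizer property.
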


\begin{proof} For i), since 
  $\left[r_{a},r_{x}\right]=r_{\left[x,a\right]}$,  we have %\\
 $\delta_{1}=r_{a}r_{x}-r_{x}r_{a}=r_{\left[x,a\right]}$.
Thus $\delta_{1}\in r_{A}$ since $x\in n_{L}(A)$. 
And we have:
\[
\begin{array}{lll} %\begin{alignedat}{2}%
\delta_{2} & = & r_{a}^{2}r_{x}-r_{x}r_{a}^{2}=r_{a}\left(r_{a}r_{x}\right)-r_{x}r_{a}^{2}\\
%	   & = & r_{a}\left(r_{a}r_{x}\right)-r_{x}r_{a}^{2}\\
	   & = & r_{a}\left(r_{x}r_{a}+\delta_{1}\right)-r_{x}r_{a}^{2}= \left(r_{a}r_{x}\right)r_{a}+r_{a}\delta_{1}-r_{x}r_{a}^{2}\\
%	   & = & \left(r_{a}r_{x}\right)r_{a}+r_{a}\delta_{1}-r_{x}r_{a}^{2}\\
	   & = & \left(r_{x}r_{a}+\delta_{1}\right)r_{a}+r_{a}\delta_{1}-r_{x}r_{a}^{2}=\delta_{1}r_{a}+r_{a}\delta_{1}\\
%	   & = & \delta_{1}r_{a}+r_{a}\delta_{1}\\
	   & \in &  r_A^{2}.
\end{array}\]
With the hypothesis of recurrence: 
 $\delta_{k}=r_{a}^{k}r_{x}-r_{x}r_{a}^{k}\in r_A^{k}$,
we get:
\[\begin{array}{lll} %$\begin{alignedat}{2}
\delta_{k+1} & = & r_{a}^{k+1}r_{x}-r_{x}r_{a}^{k+1}= r_{a}\left(r_{a}^{k}r_{x}\right)-r_{x}r_{a}^{k+1}\\
%	     & = & r_{a}\left(r_{a}^{k}r_{x}\right)-r_{x}r_{a}^{k+1}\\
	     & = & r_{a}\left(r_{x}r_{a}^{k}+\delta_{k}\right)-r_{x}r_{a}^{k+1}=\left(r_{a}r_{x}\right)r_{a}^{k}+r_{a}\delta_{k}-r_{x}r_{a}^{k+1}\\
%	     & = & \left(r_{a}r_{x}\right)r_{a}^{k}+r_{a}\delta_{k}-r_{x}r_{a}^{k+1}\\
	     & = & \left(r_{x}r_{a}+\delta_{1}\right)r_{a}^{k}+r_{a}\delta_{k}-		r_{x}r_{a}^{k+1}= \delta_{1}r_{a}^{k}+r_{a}\delta_{k}\\
%	     & = & \delta_{1}r_{a}^{k}+r_{a}\delta_{k}\\
	     & \in & \left(r_{A}\right)^{k+1}.
\end{array}\]
 And for ii), 
 we have $\left[r_{x},r_{a}\right]=r_{\left[a,x\right]}$, so %\\
 $\beta_{1}=-\delta_1\in r_{A}=r_{A}r_{x}^{0}$
since $x\in n_{L}(A)$ (where $r_{x}^{0}=1_{V}$).
Note that we have:
\[\begin{array}{lll} %$\begin{alignedat}{2}
\beta_{2} & = & r_{x}^{2}r_{a}-r_{a}r_{x}^{2}=r_{x}\left(r_{x}r_{a}\right)-r_{a}r_{x}^{2}\\
%	  & = & r_{x}\left(r_{x}r_{a}\right)-r_{a}r_{x}^{2}\\
	  & = & r_{x}\left(r_{a}r_{x}+r_{\left[a,x\right]}\right)-r_{a}r_{x}^{2}=\left(r_{x}r_{a}\right)r_{x}+r_{x}r_{\left[a,x\right]}-r_{a}r_{x}^{2}\\
%	  & = & \left(r_{x}r_{a}\right)r_{x}+r_{x}r_{\left[a,x\right]}-r_{a}r_{x}^{2}\\
	  & = & \left(r_{a}r_{x}+r_{\left[a,x\right]}\right)r_{x}+\left(r_{\left[a,x\right]}r_{x}+r_{\left[\left[a,x\right],x\right]}\right)-r_{a}r_{x}^{2}=2r_{\left[a,x\right]}r_{x}+r_{\left[\left[a,x\right],x\right]}\\
%	  & = & 2r_{\left[a,x\right]}r_{x}+r_{\left[\left[a,x\right],x\right]}\\
	  & \in & r_{A}r_{x}\dot{+}r_{A}
\end{array}.\]%\\
Set  
 $\beta_{k}=r_{x}^{k}r_{a}-r_{a}r_{x}^{k}\in r_{A}r_{x}^{k-1}\dot{+}\cdots\dot{+}r_{A}r_{x}\dot{+}r_{A}$,
and then it will follow that: 
\[\begin{array}{lll}
\beta_{k+1}&=&r_{x}^{k+1}r_{a}-r_{a}r_{x}^{k+1}=r_{x}^{k}\left(r_{x}r_{a}\right)-r_{a}r_{x}^{k+1}\\
&=&r_{x}^{k}\left(r_{a}r_{x}+r_{\left[a,x\right]}\right)-r_{a}r_{x}^{k+1}\\
&=&\left(r_{x}^{k}r_{a}\right)r_{x}+r_{x}^{k}r_{\left[a,x\right]}-r_{a}r_{x}^{k+1}\\
&=&\left(r_{a}r_{x}^{k}+\beta_{k}\right)r_{x}+r_{\left[a,x\right]}r_{x}^{k}\\
& &\hspace{2.3cm}+\beta_{1}^{\prime}-r_{a}r_{x}^{k+1} (\textnormal{ where }\beta_{1}^{\prime}=r_{x}^{k}r_{\left[a,x\right]}-r_{\left[a,x\right]}r_{x}^{k}=r_{x}^{k}%
\in r_A^{k})\\
&=&\beta_{k}r_{x}+r_{\left[a,x\right]}r_{x}^{k}+\beta_{1}^{\prime}\\
%&=&\\
&\in&\left(r_{A}r_{x}^{k-1}\dot{+}\cdots\dot{+}r_{A}\right)r_{x}+r_Ar_{x}^{k}+\dot{+}r_{A}\\
&\in& r_{A}r_{x}^{k}\dot{+}r_{A}r_{x}^{k-1}\dot{+}\cdots\dot{+}r_{A}r_{x}\dot{+}r_{A}.
\end{array}.\]
Proofs are done.
\end{proof} 

\begin{lemma} %lemma1.1.9
Let $L$ be a Leibniz algebra and $\left(l,r,V\right)$ a representation
of $L$. Let $A$ be a subspace of the vector space $L$ and $x$
in the normalizer $n_{L}(A)$ of $A$. Then we have for all integer
$k$ and $p$ in $\mathbb{N}$:
\[\left[ r_A^{p}r_{x}^{k}\right]\circ  r_{A}\subseteq r_A^{p+1}r_{x}^{k}\dot{+}\cdots\dot{+} r_A^{p+1}r_{x}\dot{+} r_A^{p+1}.\]
\end{lemma}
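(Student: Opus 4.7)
The plan is to reduce the inclusion to a single generic composite. It suffices to check, for $T \in r_A^p$ and $a \in A$, that $(T r_x^k)\circ r_a = T r_x^k r_a$ lies in $r_A^{p+1} r_x^k \dot{+}\cdots\dot{+} r_A^{p+1} r_x \dot{+} r_A^{p+1}$, since every element of $r_A^p r_x^k$ is a linear combination of such products (here $r_A^p r_x^k$ denotes the subspace spanned by a product of $p$ factors from $r_A$ followed by $k$ copies of $r_x$). The key observation is that part (ii) of the preceding lemma is tailor-made to push $r_a$ past $r_x^k$, since the hypothesis $x \in n_L(A)$ is the one already in force.

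Applying that lemma, I would rewrite $r_x^k r_a = r_a r_x^k + \beta_k$ with $\beta_k \in r_A r_x^{k-1} \dot{+}\cdots\dot{+} r_A r_x \dot{+} r_A$, and then left-multiply by $T$ to obtain
\[
T r_x^k r_a \;=\; (T r_a)\, r_x^k \;+\; T \beta_k.
\]
The first summand lies in $r_A^{p+1} r_x^k$ directly from the definition of the $(p+1)$-th power $r_A^{p+1} = r_A^{p} \cdot r_A$. For the second, each summand of $\beta_k$ has the shape $c\, r_x^i$ with $c \in r_A$ and $0 \le i \le k-1$; left-multiplying by $T$ absorbs $c$ into the power, producing a term in $r_A^{p+1} r_x^i$. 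Summing contributions yields the target inclusion.

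I do not expect any serious obstacle: the lemma is a repackaging of the previous one, used exactly once, and the only real bookkeeping is noting that left-multiplication by $r_A$ raises the exponent $p$ to $p+1$ without disturbing the $r_x$-powers, which emerge at the degrees $0,1,\ldots,k$ prescribed by $\beta_k$. Minor care should be taken with the edge case $k=0$, where the right-hand side collapses to $r_A^{p+1}$ and the statement reduces to the trivial inclusion $r_A^p \circ r_A \subseteq r_A^{p+1}$, and with a tacit induction on $p$ if one prefers to treat elements of $r_A^p$ as iterated products rather than a single symbolic $T$.
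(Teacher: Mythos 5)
Your proposal is correct and is essentially the paper's own argument: the paper also writes $\left[r_A^{p}r_{x}^{k}\right]\circ r_{A}=r_A^{p}\circ\left[r_{x}^{k}\circ r_{A}\right]$ and invokes part (ii) of the preceding lemma to replace $r_{x}^{k}\circ r_{A}$ by $r_{A}r_{x}^{k}\dot{+}\cdots\dot{+}r_{A}$, then absorbs the left factor into $r_A^{p+1}$. Your elementwise bookkeeping with $T$ and $\beta_k$ is just the same computation written at the level of generators.
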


\begin{proof}We shall note that:

$\begin{array}{ccl}
 \left[ r_A^{p}r_{x}^{k}\right]\circ  r_{A}& = & r_A^{p}\circ\left[r_{x}^{k}\circ r_{A}\right]\\ 
% & = &  r_A^{p}\left(r_{x}^{k}\times r_{A}\right)r_{x}\\
 & \subseteq &  r_A^{p}\left(r_{A}r_{x}^{k}\dot{+}\cdots\dot{+}r_{A}r_{x}\dot{+}r_{A}\right)\\
 & \subseteq &  r_A^{p+1}r_{x}^{k}\dot{+}\cdots\dot{+} r_A^{p+1}r_{x}\dot{+}\left(r_{A}%
\right)^{p+1}.
\end{array}$
%\medskip{}
\end{proof}

Thanks to the preceding lemma whe have for  all integer
$k,l,p$ and $q$ in $\mathbb{N}$:
\[ r_A^{p}r_{x}^{k}\circ r_A^{q}r_{x}^{l}\subseteq r_A^{p+q}r_{x}^{k+l}\dot{+}\cdots\dot{+} r_A^{p+q}r_{x}^{l}.\]
%The results are clear since for all $x$, $y$ in $L$, 

%$ $ e

\begin{lemma} %lemma1.1.10
Let $L$ be a Leibniz algebra and $\left(l,r,V\right)$
a representation of $L$. Let $A$ be a subspace of the vector space
$L$ and $x$ in the normalizer $n_{L}(A)$ of $A$ and for a non negative integer $k$ let $E_k$ be the subspace $E_k=r_{A}r_{x}^{k}\dot{+}\cdots\dot{+}r_{A}$.  Then we have
for all integer $p$ in $\mathbb{N^*}$: 
\[
  E_k^{p}\subseteq%
 r_A^{p}r_{x}^{pk}\dot{+}\cdots\dot{+} r_A^{p}r_{x}^{2k}\dot{+}\cdots%\]
%\[
%\hphantom{aaaaaaaaaaaaaaaaaaaaaaaaaaaaaaaaaaa}
\dot{+} r_A^{p}r_{x}\dot{+} r_A^{p}
\]
\end{lemma}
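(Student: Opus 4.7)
The plan is to proceed by induction on $p \in \mathbb{N}^{*}$. The base case $p=1$ is immediate, since $E_k^{1}=E_k=r_A r_x^{k}\dot{+}\cdots\dot{+}r_A r_x\dot{+}r_A$ is exactly the right-hand side with $p=1$.

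For the inductive step, I would write $E_k^{p+1}=E_k^{p}\circ E_k$ and apply the composition formula recorded just after the preceding lemma, specialised to
\[
r_A^{p}r_x^{m}\circ r_A r_x^{l}\subseteq r_A^{p+1}r_x^{m+l}\dot{+}\cdots\dot{+}r_A^{p+1}r_x^{l}.
\]
By the inductive hypothesis, $E_k^{p}$ decomposes into summands of the form $r_A^{p}r_x^{m}$ with $0\le m\le pk$, while $E_k$ decomposes into summands $r_A r_x^{l}$ with $0\le l\le k$. Composing such a pair produces terms $r_A^{p+1}r_x^{j}$ for every integer $j$ satisfying $l\le j\le m+l$.

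Summing the contributions over all admissible pairs $(m,l)$, the exponent $j$ sweeps out the entire range from $0$ to $(p+1)k$: taking $l=0$ and letting $m$ vary from $0$ to $pk$ hits every $j$ between $0$ and $pk$, while taking $l=k$ and letting $m$ vary from $0$ to $pk$ hits every $j$ between $k$ and $(p+1)k$, and these two ranges overlap at $j=k$ whenever $p\ge 1$. This yields the desired inclusion $E_k^{p+1}\subseteq r_A^{p+1}r_x^{(p+1)k}\dot{+}\cdots\dot{+}r_A^{p+1}r_x\dot{+}r_A^{p+1}$.

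There is no real conceptual obstacle here; the only delicate point is the index bookkeeping, namely verifying that every power of $r_x$ from $0$ to $(p+1)k$ actually arises and that no exponent outside this range is produced. Both facts follow at once from the two extreme choices $l=0$ and $l=k$ described above, so the argument reduces to carefully collecting summands.
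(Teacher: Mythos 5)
Your proof is correct and takes essentially the same route as the paper: induction on $p$, writing $E_k^{p+1}=E_k^{p}\circ E_k$, distributing, and invoking the composition rule $r_A^{p}r_x^{m}\circ r_A^{q}r_x^{l}\subseteq r_A^{p+q}r_x^{m+l}\dot{+}\cdots\dot{+}r_A^{p+q}r_x^{l}$ recorded after the preceding lemma (the paper merely warms up with explicit computations of $E_k^{2}$ and $E_k^{3}$ before the same inductive step). One small remark: since the statement is only an inclusion into the subspace sum $r_A^{p}r_x^{pk}\dot{+}\cdots\dot{+}r_A^{p}$, all you need is that no exponent exceeds $(p+1)k$; your verification that every exponent from $0$ to $(p+1)k$ actually arises is superfluous, though harmless.
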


\begin{proof} Let us compute $E_k^p$ for $p=2,3$;  %
 we have $\left[r_{x},r_{a}\right]=r_{\left[a,x\right]}$, so \\
$\begin{array}{lll}
  E_k^2&=&\left(r_{A}r_{x}^{k}\dot{+}\cdots\dot{+}r_{A}\right)^{2}\\
 & = & \left(r_{A}r_{x}^{k}\dot{+}\cdots\dot{+}r_{A}\right)\left(r_{A}r_{x}^{k}\dot{+}\cdots\dot{+}r_{A}\right)\\
 & \subseteq & \left(r_{A}r_{x}^{k}\right)\left(r_{A}r_{x}^{k}\right)\dot{+}\cdots\dot{+}r_{A}\left(r_{A}r_{x}\right)\dot{+}\left(r_{A}r_{x}\right)r_{A}\dot{+}r_{A}r_{A}\\
 & \subseteq &  r_A^{2}r_{x}^{2k}\dot{+}\cdots\dot{+} r_A^{2}r_{x}^{k}\dot{+}\cdots\dot{+} r_A^{2}r_{x}\dot{+} r_A^{2}
\end{array}$\\
$\begin{array}{lll}
E_k^3&=&\left(r_{A}r_{x}^{k}\dot{+}\cdots\dot{+}r_{A}\right)^{3}\\
 & = & \left(r_{A}r_{x}^{k}\dot{+}\cdots\dot{+}r_{A}\right)^{2}\left(r_{A}r_{x}^{k}\dot{+}\cdots\dot{+}r_{A}\right)\\
 & \subseteq & \left( r_A^{2}r_{x}^{2k}\dot{+}\cdots\dot{+} r_A^{2}r_{x}^{k}\dot{+}\cdots%
\dot{+} r_A^{2}\right)\left(r_{A}r_{x}^{k}\dot{+}\cdots\dot{+}r_{A}\right)\\
 & \subseteq & \left( r_A^{2}r_{x}^{2k}\right)\left(r_{A}r_{x}^{k}\right)\dot{+}\cdots\dot{+} r_A^{2}\left(r_{A}r_{x}\right)\dot{+}\left( r_A^{2}r_{x}\right)r_{A}\dot{+} r_A^{2}r_{A}\\
 & \subseteq &  r_A^{3}r_{x}^{3k}\dot{+}\cdots\dot{+} r_A^{3}r_{x}^{2k}\dot{+}\cdots\dot{+} r_A^{3}r_{x}\dot{+} r_A^{3}
\end{array}$\\
and %\lyxadded{B\'er\'e C\^ome}{Sat Jan 18 11:06:24 2014}{
set by hypothesis that
we have %}

$E_k^{p-1}\subseteq r_A^{p-1}r_{x}^{\left(p-1\right)k}\dot{+}\cdots\dot{+} r_A^{p-1}r_{x}\dot{+} r_A^{p-1}.$

And so we get
\[\begin{array}{lll}
E_k^p&=&
\left(r_{A}r_{x}^{k}\dot{+}\cdots\dot{+}r_{A}\right)^{p}\\
 & = & \left(r_{A}r_{x}^{k}\dot{+}\cdots\dot{+}r_{A}\right)^{p-1}\left(r_{A}r_{x}^{k}\dot{+}\cdots\dot{+}r_{A}\right)\\
 & \subseteq & \left( r_A^{p-1}r_{x}^{\left(p-1\right)k}\dot{+}\cdots\right.%
\\
 &    & \hspace{1.6cm}\left.%
\dot{+} r_A^{p-1}r_{x}\dot{+} r_A^{p-1}\right)%
\left(r_{A}r_{x}^{k}\dot{+}\cdots\dot{+}r_{A}^{\hphantom{b}}\right)\\
 & \subseteq & \left( r_A^{p-1}r_{x}^{\left(p-1\right)k}\right)\left(r_{A}r_{x}^{k}\right)\dot{+}\cdots\dot{+} r_A^{p-1}\left(r_{A}r_{x}\right)\\
&  & \hphantom{berecomeemocereb}\dot{+}\left( r_A^{p-1}r_{x}\right)r_{A}\dot{+} r_A^{p-1}r_{A}\\
 & \subseteq &  r_A^{p}r_{x}^{pk}\dot{+}\cdots\dot{+} r_A^{p}r_{x}^{2k}\dot{+}\cdots\dot{+} r_A^{p}r_{x}\dot{+} r_A^{p}
\end{array}\]
Proof is then done.
\end{proof}

\begin{lemma}\label{coefbin} %\lemma1.1.11
Let $L$ be a Leibniz algebra and $\left(l,r,V\right)$
a representation of $L$. Let $A$ be a subspace of the vector space
$L$ and $x$ in the normalizer $n_{L}(A)$ of $A$. Let $m$ be a non negative integer.
%with $\left(r_{x}\right)^{m}=0$. 
Then for all $\left(\lambda,a\right)\in F\times A$,
\[
f_m=\left(r_{a+\lambda x}\right)^{m}-\sum_{k=0}^{m}\left(\hspace{-5pt}\begin{array}{c}
m\\
k
\end{array}\hspace{-5pt}\right)\lambda^{k} r_{a}^{m-k} r_x^{k}\in r_{A}r_{x}^{m}\dot{+}\cdots\dot{+}r_{A}.
\]
\end{lemma}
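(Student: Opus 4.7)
The plan is induction on $m$. The base cases $m=0$ and $m=1$ are trivial, since $f_0=0$ and $f_1 = r_{a+\lambda x} - r_a - \lambda r_x = 0$ by linearity of $r$.

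For the inductive step I would factor $(r_{a+\lambda x})^{m+1} = (r_{a+\lambda x})^m(r_a + \lambda r_x)$, substitute the hypothesis $(r_{a+\lambda x})^m = f_m + \sum_{k=0}^m \binom{m}{k}\lambda^k r_a^{m-k} r_x^k$, and distribute. After using the earlier $\beta_k$-lemma to replace each $r_x^k r_a$ by $r_a r_x^k + \beta_k$ (with $\beta_k \in r_A r_x^{k-1} \dot{+} \cdots \dot{+} r_A$), the expansion splits into a straight binomial piece and a remainder
$$f_m r_a + \lambda f_m r_x + \sum_{k=0}^m \binom{m}{k}\lambda^{k} r_a^{m-k} \beta_k.$$

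Pascal's identity $\binom{m}{k-1} + \binom{m}{k} = \binom{m+1}{k}$ then collapses the two straight sums $\sum_{k=0}^m \binom{m}{k}\lambda^k r_a^{m-k+1} r_x^k$ and $\sum_{k=0}^m \binom{m}{k}\lambda^{k+1} r_a^{m-k} r_x^{k+1}$ into $\sum_{k=0}^{m+1} \binom{m+1}{k}\lambda^k r_a^{m+1-k} r_x^k$, which is exactly the subtracted binomial sum in the definition of $f_{m+1}$. Hence $f_{m+1}$ equals the remainder above, and the inductive step reduces to showing that remainder lies in $r_A r_x^{m+1} \dot{+} \cdots \dot{+} r_A$.

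The main obstacle is this last containment: multiplication by $\lambda r_x$ only shifts $r_x$-powers, but right-multiplication of $f_m$ by $r_a$, and the terms $r_a^{m-k}\beta_k$, naturally live in $r_A^p r_x^\bullet$ with $p\ge 2$. Closing the induction therefore requires the preceding lemma $[r_A^p r_x^k] \circ r_A \subseteq r_A^{p+1} r_x^k \dot{+} \cdots \dot{+} r_A^{p+1}$ together with iterated use of the $\beta_k$-identity to push all $r_a$ factors into a canonical left-most position, and a careful final accounting to verify that the resulting terms collapse back into the claimed flag $r_A r_x^{m+1} \dot{+} \cdots \dot{+} r_A$. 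I expect this bookkeeping---and not the algebraic manipulation itself---to be the delicate step.
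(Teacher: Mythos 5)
Your induction follows the paper's proof step for step: the same factorisation $(r_{a+\lambda x})^{m+1}=(r_a+\lambda r_x)^{m}(r_a+\lambda r_x)$, the same substitution of the inductive hypothesis, the same use of $r_x^{k}r_a=r_ar_x^{k}+\beta_k$ and of Pascal's rule, arriving at exactly the identity the paper obtains, namely
$f_{m+1}=f_mr_a+\lambda f_mr_x+\sum_{k=0}^{m}\left(\begin{array}{c}m\\ k\end{array}\right)\lambda^{k}r_a^{m-k}\beta_k$. Up to that point your reduction is correct and identical to the paper's.

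What is missing is the proof of the containment itself, which is the only substantive content of the lemma. You explicitly defer it (``I expect this bookkeeping\dots to be the delicate step''), and the tools you name do not close it as stated: the terms $r_a^{m-k}\beta_k$ and $f_mr_a$ live in spaces $r_A^{\,p}r_x^{\,j}$ with $p\ge 2$ --- already for $m=2$ one finds $f_3=\lambda\bigl(2r_ar_{[a,x]}+r_{[a,x]}r_a\bigr)+\lambda^{2}(\cdots)$, and $r_ar_{[a,x]}\in r_A^{\,2}$ --- whereas the swapping lemma $\bigl[r_A^{\,p}r_x^{\,k}\bigr]\circ r_A\subseteq r_A^{\,p+1}r_x^{\,k}\dot{+}\cdots\dot{+}r_A^{\,p+1}$ only raises the $r_A$-exponent; since $A$ is merely a subspace, $r_A$ is not closed under composition, so no amount of commuting with the $\beta$'s brings such terms back into the one-factor flag $r_Ar_x^{m+1}\dot{+}\cdots\dot{+}r_A$. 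Either the right-hand side has to be read loosely, as a sum of all $r_A^{\,p}r_x^{\,j}$ with $p\ge 1$ (which is all that the subsequent nilpotency lemma actually uses), or an additional argument is required; a proposal that stops at the expectation that the bookkeeping works out has not proved the lemma. For comparison, the paper's own proof makes exactly the same one-line absorption of $(r_A)^{m-k}\bigl(r_Ar_x^{m}\dot{+}\cdots\dot{+}r_A\bigr)$ and $\bigl(r_Ar_x^{m}\dot{+}\cdots\dot{+}r_A\bigr)r_A$ into $r_Ar_x^{m+1}\dot{+}\cdots\dot{+}r_A$ without justification, so your instinct about where the difficulty sits is sound --- but in your write-up the gap is real, not just delicate.
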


\begin{proof} By induction:

$
f_{1}=\left(r_{a+\lambda x}\right)^{1}-\ds\sum_{k=0}^{1}\left(\hspace{-5pt}\begin{array}{c}
1\\
k
\end{array}\hspace{-5pt}\right)\lambda^{k}r_{a}^{1-k}r_x^k
$\\
$
\hphantom{aafal}=r_{a+\lambda x}-\left(r_{a}+\lambda r_{x}\right)=0\in r_{A}r_{x}\dot{+}r_{A}.
$

And 
if by hypoyhesis
we have:  

$
f_{m}=\left(r_{a+\lambda x}\right)^{m}-\ds\sum_{k=0}^{m}\left(\hspace{-5pt}\begin{array}{c}
m\\
k
\end{array}\hspace{-5pt}\right)\lambda^{k} r_{a}^{m-k} r_x^{k}\in r_{A}r_{x}^{m}\dot{+}\cdots\dot{+}r_{A}.
$\\

Then we got:

$\ds f_{m+1}=\left(r_{a+\lambda x}\right)^{m+1}-{\displaystyle \sum_{k=0}^{m+1}}\left(\hspace{-5pt}\begin{array}{c}
m+1\\
k
\end{array}\hspace{-5pt}\right)\lambda^{k} r_a^{m-k+1} r_x^{k}$\\
$\ds f_{m+1}=\left(r_{a}+\lambda r_{x}\right)^{m+1}-{\displaystyle \sum_{k=0}^{m+1}}\left(\hspace{-5pt}\begin{array}{c}
m+1\\
k
\end{array}\hspace{-5pt}\right)\lambda^{k} r_a^{m-k+1} r_x^{k}$\\
$\ds\hphantom{f_{m+1}}=\left(r_{a}+\lambda r_{x}\right)^{m}\left(r_{a}+\lambda r_{x}\right)-{\displaystyle \sum_{k=0}^{m+1}}\left(\hspace{-5pt}\begin{array}{c}
m+1\\
k
\end{array}\hspace{-5pt}\right)\lambda^{k} r_a^{m-k+1} r_x^{k}$\\
 $\ds\hphantom{f_{m+1}}=\left(\sum_{k=0}^{m}\left(\hspace{-5pt}\begin{array}{c}
m\\
k
\end{array}\hspace{-5pt}\right)\lambda^{k} r_{a}^{m-k} r_x^{k}+f_{m}\right)\left(r_{a}+\lambda r_{x}\right)$\\
 $\ds\hphantom{aaaaaaaaaaaaaaaaaaa}-{\displaystyle \sum_{k=0}^{m+1}}\left(\hspace{-5pt}\begin{array}{c}
m+1\\
k
\end{array}\hspace{-5pt}\right)\lambda^{k} r_a^{m-k+1} r_x^{k}$\\
%%%%%%%%%%%%%%%%%%%%%%%%%%%%%%%%%%%%%%%%%%%%%%%%%%%%%%%%%%%%%%%%%%%%%%%%%%%%%%%%%%%%%%%%%%%%%%%%%%%%%%%%%%
$\ds\hphantom{f_{m+1}}=\sum_{k=0}^{m}\left(\hspace{-5pt}\begin{array}{c}
m\\
k
\end{array}\hspace{-5pt}\right)\lambda^{k} r_{a}^{m-k} r_x^{k}r_{a}+f_{m}r_{a}$\\
 $\ds\hphantom{uuucomeu}+\sum_{k=0}^{m}\left(\hspace{-5pt}\begin{array}{c}
m\\
k
\end{array}\hspace{-5pt}\right)\lambda^{k+1} r_{a}^{m-k} r_x^{k+1}+\lambda f_{m}r_{x}$\\
 $\ds\hphantom{uuucomeuuuuu}-\sum_{k=0}^{m+1}\left(\hspace{-5pt}\begin{array}{c}
m+1\\
k
\end{array}\hspace{-5pt}\right)\lambda^{k} r_a^{m-k+1} r_x^{k}$\\

Then we have

$\ds f_{m+1}=\sum_{k=0}^{m}\left(\hspace{-5pt}\begin{array}{c}
m\\
k
\end{array}\hspace{-5pt}\right)\lambda^{k} r_{a}^{m-k}\left(r_{x}^{k}r_{a}\right)+f_{m}r_{a}$\\
 $\ds\hphantom{uuucomeuuuuu}+\sum_{k=0}^{m}\left(\hspace{-5pt}\begin{array}{c}
m\\
k
\end{array}\hspace{-5pt}\right)\lambda^{k+1} r_{a}^{m-k} r_x^{k+1}+\lambda f_{m}r_{x}$\\
$\ds\hphantom{uuucomeu}-\sum_{k=0}^{m+1}\left(\hspace{-5pt}\begin{array}{c}
m+1\\
k
\end{array}\hspace{-5pt}\right)\lambda^{k} r_a^{m-k+1} r_x^{k}$\\

Since $r_{x}^{k}r_{a}=r_{a}r_{x}^{k}+\beta_{k}$ we get

$\ds f_{m+1}=\sum_{k=0}^{m}\left(\hspace{-5pt}\begin{array}{c}
m\\
k
\end{array}\hspace{-5pt}\right)\lambda^{k} r_{a}^{m-k}\left(r_{a}r_{x}^{k}+\beta_{k}\right)+f_{m}r_{a}$\\
 $\ds\hphantom{uuucomeuuuuu}+\sum_{k=0}^{m}\left(\hspace{-5pt}\begin{array}{c}
m\\
k
\end{array}\hspace{-5pt}\right)\lambda^{k+1} r_{a}^{m-k} r_x^{k+1}+\lambda f_{m}r_{x}$\\
 $\ds\hphantom{uuucomeuuuuu}-\sum_{k=0}^{m+1}\left(\hspace{-5pt}\begin{array}{c}
m+1\\
k
\end{array}\hspace{-5pt}\right)\lambda^{k} r_a^{m-k+1} r_x^{k}$\\

%%%% A AJUSTER?
$\ds f_{m+1}=\sum_{k=0}^{m}\left(\hspace{-5pt}\begin{array}{c}
m\\
k
\end{array}\hspace{-5pt}\right)\lambda^{k} r_a^{m-k+1} r_x^{k}+\sum_{k=0}^{m}\left(\hspace{-5pt}\begin{array}{c}
m\\
k
\end{array}\hspace{-5pt}\right)\lambda^{k} r_{a}^{m-k}\beta_{k}+f_{m}r_{a}$\\
 $\ds\hphantom{uuucomeuuuuu}+\sum_{k=0}^{m}\left(\hspace{-5pt}\begin{array}{c}
m\\
k
\end{array}\hspace{-5pt}\right)\lambda^{k+1} r_{a}^{m-k} r_x^{k+1}+\lambda f_{m}r_{x})$\\
 $\ds\hphantom{uuucomeuuuuu}-\sum_{k=0}^{m+1}\left(\hspace{-5pt}\begin{array}{c}
m+1\\
k
\end{array}\hspace{-5pt}\right)\lambda^{k} r_a^{m-k+1} r_x^{k}$\\
$\ds\hphantom{f_{m+1}}=r_{a}^{m+1}+\sum_{k=1}^{m}\left(\hspace{-5pt}\begin{array}{c}
m\\
k
\end{array}\hspace{-5pt}\right)\lambda^{k} r_a^{m-k+1} r_x^{k}+\sum_{k=0}^{m}\left(\hspace{-5pt}\begin{array}{c}
m\\
k
\end{array}\hspace{-5pt}\right)\lambda^{k} r_{a}^{m-k}\beta_{k}+f_{m}r_{a}$\\
 $\ds\hphantom{uuucomeuuuuu}+\lambda^{m+1}r_{x}^{m+1}+\sum_{k=0}^{m-1}\left(\hspace{-5pt}%
\begin{array}{c}
m\\
k
\end{array}\hspace{-5pt}\right)\lambda^{k+1} r_{a}^{m-k} r_x^{k+1}+\lambda f_{m}r_{x}$\\
 $\ds\hphantom{uuucomeuuuuu}-\sum_{k=0}^{m+1}\left(\hspace{-5pt}\begin{array}{c}
m+1\\
k
\end{array}\hspace{-5pt}\right)\lambda^{k} r_a^{m-k+1} r_x^{k}$\\
$\ds\hphantom{f_{m+1}}=r_{a}^{m+1}+\sum_{j=1}^{m}\left(\hspace{-5pt}\begin{array}{c}
m\\
j
\end{array}\hspace{-5pt}\right)\lambda^{k} r_a^{m-j+1} r_x^{j}+\sum_{k=0}^{m}\left(\hspace{-5pt}\begin{array}{c}
m\\
k
\end{array}\hspace{-5pt}\right)\lambda^{k} r_{a}^{m-k}\beta_{k}+f_{m}r_{a}$\\
 $\ds\hphantom{uuucomeuuuuu}+\lambda^{m+1}r_{x}^{m+1}+\sum_{j=1}^{m}\left(\hspace{-5pt}\begin{array}{c}
m\\
j-1
\end{array}\hspace{-5pt}\right)\lambda^{j} r_a^{m-j+1} r_x^{j}+\lambda f_{m}r_{x}$\\
 $\ds\hphantom{uuucomeuuuuu}-\sum_{k=0}^{m+1}\left(\hspace{-5pt}\begin{array}{c}
m+1\\
k
\end{array}\hspace{-5pt}\right)\lambda^{k} r_a^{m-k+1} r_x^{k}$\\
$\ds\hphantom{f_{m+1}}=r_{a}^{m+1}+\sum_{j=1}^{m}\left(\hspace{-5pt}\begin{array}{c}
m\\
j
\end{array}\hspace{-5pt}\right)\lambda^{k} r_a^{m-j+1} r_x^{j}$\\
$\ds\hphantom{uuucomeuuuuu}+\sum_{j=1}^{m}\left(\hspace{-5pt}\begin{array}{c}
m\\
j-1
\end{array}\hspace{-5pt}\right)\lambda^{j} r_a^{m-j+1} r_x^{j}+\lambda^{m+1}r_{x}^{m+1}$\\
 $\ds\hphantom{uuucomeuuuuu}-\left(r_{a}^{m+1}+\sum_{k=1}^{m}\left(\hspace{-5pt}\begin{array}{c}
m+1\\
k
\end{array}\hspace{-5pt}\right)\lambda^{k} r_a^{m-k+1} r_x^{k}+\lambda^{m+1}r_{x}^{m+1}\right)$\\
$\ds\hphantom{uuucomeuuuuu}+\sum_{k=0}^{m}\left(\hspace{-5pt}\begin{array}{c}
m\\
k
\end{array}\hspace{-5pt}\right)\lambda^{k} r_{a}^{m-k}\beta_{k}+f_{m}r_{a}+\lambda f_{m}r_{x}$\\
%%%%%%%%%%%%%%%%%%%%%%%%%%%%%%%%%%%%%%%%%%%%%%%%%%%%%%%

Finally we have

$\ds\hphantom{f_{m+1}}=
\sum_{k=0}^{m}\left(\hspace{-5pt}\begin{array}{c}
m\\
k
\end{array}\hspace{-5pt}\right)\lambda^{k} r_{a}^{m-k}\beta_{k}
+f_{m}r_{a}+\lambda f_{m}r_{x}$\\
%%%%%%%%%%%%%%%%%%%%%%%%%%%%%%%%%%%%%%%%%%%%%%%%%%%%%%%%%%%
$\ds\hphantom{f_{m+1}}\in\sum_{k=0}^{m}\left(\hspace{-5pt}\begin{array}{c}
m\\
k
\end{array}\hspace{-5pt}\right)\lambda^{k}\left(r_{A}\right)^{m-k}\left(r_{A}r_{x}^{m}\dot{+}\cdots\dot{+}r_{A}\right)$\\
$\ds\hphantom{fmmmmmmaaaaaaa}\dot{+}\left(r_{A}r_{x}^{m}\dot{+}
\cdots\dot{+}r_{A}\right)r_{A}\dot{+}\lambda\left(r_{A}r_{x}^{m}\dot{+}\cdots\dot{+}r_{A}\right)r_{x}$\\
$\ds\hphantom{f_{m+1}}\in r_{A}r_{x}^{m+1}\dot{+}\cdots\dot{+}\cdots\dot{+}r_{A}r_{x}\dot{+}r_{A}$.
\end{proof}

\begin{definition}

Call $x\in End(V)$ semisimple if the roots of its minimum polynomial
over $F$ are all distinct, or equivalently, if $x$ is diagonalizable.%Finally we have

\end{definition} 

\begin{remark}\label{sumetrest}
\begin{description}
\item[i)] Two commuting semisimple endomorphisms are simultaneously diagonalizable,
so their sum and difference are both semisimple.
\item[ii)] If $x$ is semisimple and $x$ leaves a subspace $W$ invariant,
then the restriction of $x$ to $W$ denoted by $x_{|W}$ is semisimple. 
\end{description}
\end{remark} %the 

\begin{definition}%     Def1.1.22

Call $x\in L$ ad-semisimple (respectively Ad-semisimple) if the endomorphisms
$ad_{x}$ is semisimple (respectively $Ad_{x}$ is semisimple).

Call $x\in L$ ad-nilpotent (respectively Ad-nilpotent) if the endomorphisms
$ad_{x}$ is nilpotent (respectively $Ad_{x}$ is nilpotent).

\end{definition}

\begin{lemma}\label{tras}
 Let $V=V_1\oplus V_2$ be a direct sum of two vector spaces $V_1,V_2$, an non negative integer $p$ and $\sigma$ an endomorphism of $V$ shuch that $\sigma^p(V)\subseteq V_1$, then the trace of $\sigma$  denoted by $tr(\sigma)=tr(\sigma_{|V_1})$, where $\sigma_{|V_1}$ is the restriction of $\sigma$ to $V_1$.
\end{lemma}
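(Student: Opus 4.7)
\emph{Plan.} First, a word on the statement itself. The notation $\sigma_{|V_1}$ is only meaningful as an endomorphism of $V_1$ when $\sigma(V_1)\subseteq V_1$, and without this invariance the conclusion $tr(\sigma)=tr(\sigma_{|V_1})$ can already fail on examples (take $V=F^2$ with $V_1=Fe_1$ and $\sigma$ the matrix $\begin{pmatrix}1 & -1\\ 1 & -1\end{pmatrix}$: then $\sigma^2=0\subseteq V_1$ but $tr(\sigma)=0$ while the upper-left entry is $1$). I therefore read the statement as carrying the implicit hypothesis that $V_1$ is $\sigma$-stable, and proceed under that assumption, which is the situation that will arise when this lemma is applied.

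Granting $\sigma(V_1)\subseteq V_1$, fix a basis of $V$ adapted to the decomposition $V=V_1\oplus V_2$. In block form
\[
\sigma=\begin{pmatrix} A & B\\ 0 & D\end{pmatrix},\qquad A=\sigma_{|V_1}\in End(V_1),\quad D\in End(V_2).
\]
A one-line induction on the exponent then gives
\[
\sigma^p=\begin{pmatrix}A^p & *\\ 0 & D^p\end{pmatrix},
\]
the upper-right block being some element of $Hom(V_2,V_1)$ whose explicit form is irrelevant. Applying $\sigma^p$ to a vector of the form $(0,v_2)\in V_1\oplus V_2$ produces a vector whose $V_2$-component is $D^p v_2$; the hypothesis $\sigma^p(V)\subseteq V_1$ forces this component to vanish for every $v_2\in V_2$, so $D^p=0$. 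In particular $D$ is nilpotent and $tr(D)=0$.

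Using the additivity of trace along the diagonal blocks of a (block) triangular matrix, this yields
\[
tr(\sigma)=tr(A)+tr(D)=tr(A)=tr(\sigma_{|V_1}),
\]
which is the desired equality. The only substantive step in the argument is the nilpotency of $D$; the rest is linear-algebraic bookkeeping. The genuine obstacle is therefore conceptual rather than computational: one must first recognize (and make explicit) the missing standing assumption that $V_1$ is $\sigma$-invariant, after which the proof is immediate.
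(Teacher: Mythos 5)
Your proof is correct, and your reading of the statement is the right one: the conclusion only makes sense (and is only true) when $V_1$ is $\sigma$-invariant, your $2\times 2$ counterexample is valid, and in the only place the lemma is used this hypothesis holds automatically, since there $V_1=\Ess(L)$ is a two-sided ideal and $\sigma=ad_x\circ ad_y$, so $\sigma(\Ess(L))\subseteq\Ess(L)$. Where you differ from the paper is in execution. The paper invokes algebraic closure to put $\sigma$ in upper triangular form in a basis extending a basis of $V_1$ (which already presupposes the invariance you made explicit), and then argues that the diagonal entries $\lambda_{m+1},\dots,\lambda_n$ vanish by asserting that for each $i>m$ there is a nonzero eigenvector $v_i\in V_2$ with $\sigma(v_i)=\lambda_i v_i$; as written this step is not justified, since the completing vectors of a triangularizing basis need be neither eigenvectors of $\sigma$ nor elements of $V_2$ (at best one can make such a statement for the map induced on $V/V_1$). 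Your block-triangular argument replaces this with the observation that the $V_2$-block $D$ satisfies $D^p=0$, hence is nilpotent and traceless, which is exactly the correct content behind the paper's eigenvalue computation: it needs no algebraic closure, works over any field, and avoids the faulty eigenvector claim. In short, both proofs reduce to showing that the block complementary to $V_1$ contributes nothing to the trace, but yours does so rigorously and slightly more generally, and your explicit identification of the missing invariance hypothesis is a genuine improvement on the statement as printed.
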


\begin{proof}
Since we have an algebraically closed field, we can find a basis $\{v_1,\cdots,v_m,\cdots, v_n\}$ of $V$ whith $\{v_1,\cdots,v_m,\}$ is a basis of $V_1$ and scalars $\lambda_1,\cdots,\lambda_n$ shuch that the matrix of $\sigma$ in this basis is 
$$
 N_{0k}=\left(\begin{array}{ccccc}
\lambda_{1} & a_{1,2} & a_{1,3} & \cdots & a_{1,n}\\
0 & \lambda_{2} & a_{2,3} & \cdots & a_{2,n}\\
\vdots & 0 & \ddots & \ddots & \vdots\\
\vdots & \vdots & \ddots & \lambda_{n-1} & a_{n-1,n}\\
0 & 0 & \cdots & 0 & \lambda_{n}
\end{array}\right)
$$
For $m+1\leq i\leq n$, we have a vector $0\neq v_i\in V_2$ shuch that $\sigma(v_i)=\lambda_iv_i$. \\ Then 
$\sigma^p(v_i)=\lambda_i^pv_i\in V_2\cap V_1=\{0\}$. So $\lambda_i=0$ for $m+1\leq i\leq n$, and $$tr(\sigma)=\ds\sum_{j=1}^n\lambda_j=\ds\sum_{j=1}^m\lambda_j=tr(\sigma_{|V_1})$$.
\end{proof}%\lambda_{2}\d

\section{Radical and Nilradical.}\label{sec3} %1.5%Cartan's Criterion,

The proof of following proposition  can be found in  \cite{Hymph}.
\begin{proposition}\label{prop151} %prop1.5.1
Let $\mathfrak{W}$ be a Lie subalgebra of $End_F(V)$ where $V$
is an $F$-vector space. Then $\mathfrak{W}$ is solvable if and only
if $tr(x\circ y)=0$ for all $x\in\mathfrak{W}$ and $y\in[\mathfrak{W},\mathfrak{W}]$. 
\end{proposition}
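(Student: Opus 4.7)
The plan is to prove both implications, with the converse being the substantive Cartan criterion for solvability.

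For the forward direction, assuming $\mathfrak{W}$ solvable, I would invoke Lie's theorem: since $F$ is algebraically closed of characteristic zero, there exists a basis of $V$ in which every element of $\mathfrak{W}$ is upper triangular. Consequently each bracket $[a,b]$ with $a,b\in\mathfrak{W}$ is strictly upper triangular, so every $y\in[\mathfrak{W},\mathfrak{W}]$ is strictly upper triangular. The product of an upper triangular matrix by a strictly upper triangular one remains strictly upper triangular, hence $tr(x\circ y)=0$ for $x\in\mathfrak{W}$ and $y\in[\mathfrak{W},\mathfrak{W}]$.

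For the converse I would follow the classical route through a Jordan-decomposition lemma. First record the trace symmetry $tr([x,y]z)=tr(x[y,z])$ for all $x,y,z\in End_F(V)$. Introduce the auxiliary space
\[
M=\{u\in End_F(V):[u,\mathfrak{W}]\subseteq[\mathfrak{W},\mathfrak{W}]\},
\]
which contains $\mathfrak{W}$. Writing an arbitrary $z\in[\mathfrak{W},\mathfrak{W}]$ as $\sum[a_i,b_i]$ and applying the symmetry,
\[
tr(zu)=\sum tr([a_i,b_i]u)=\sum tr(a_i[b_i,u]);
\]
since $[b_i,u]\in[\mathfrak{W},\mathfrak{W}]$, the hypothesis makes every summand vanish, so $tr(zu)=0$ for all $u\in M$.

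The main obstacle is the accompanying nilpotency lemma: any $z\in End_F(V)$ with $tr(zu)=0$ for every $u\in M$ as above is nilpotent. To prove it I would take the Jordan decomposition $z=z_s+z_n$, fix a basis diagonalising $z_s$ with eigenvalues $\lambda_1,\ldots,\lambda_n$, and let $E\subseteq F$ be the $\mathbb{Q}$-span of the $\lambda_i$; it suffices to check that the $\mathbb{Q}$-dual $E^{*}$ is zero. For any $\mathbb{Q}$-linear $f:E\to\mathbb{Q}$, form the diagonal endomorphism $y$ with entries $f(\lambda_i)$. A Lagrange interpolation argument supplies a polynomial $r\in F[T]$ without constant term with $ad_y=r(ad_{z_s})$; since $ad_{z_s}$ is itself a polynomial without constant term in $ad_z$ (being its semisimple part), and $ad_z$ sends $\mathfrak{W}$ into $[\mathfrak{W},\mathfrak{W}]$, we obtain $[y,\mathfrak{W}]\subseteq[\mathfrak{W},\mathfrak{W}]$, i.e.\ $y\in M$. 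The trace hypothesis then gives $\sum\lambda_i f(\lambda_i)=0$; applying $f$ yields $\sum f(\lambda_i)^{2}=0$ in $\mathbb{Q}$, forcing $f\equiv 0$. Hence $z_s=0$ and $z$ is nilpotent. Applying this to every $z\in[\mathfrak{W},\mathfrak{W}]$, each such $z$ is nilpotent, and Engel's theorem makes $[\mathfrak{W},\mathfrak{W}]$ a nilpotent Lie algebra, so $\mathfrak{W}$ is solvable.
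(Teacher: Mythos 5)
Your proof is correct and is exactly the classical Cartan-criterion argument (Lie's theorem for the forward direction, the Jordan-decomposition trace lemma plus Engel for the converse) found in Humphreys, which is precisely the source the paper cites in lieu of a proof. One small precision: state the nilpotency lemma with the hypothesis $[z,\mathfrak{W}]\subseteq[\mathfrak{W},\mathfrak{W}]$ (i.e.\ $z\in M$) in addition to $tr(zu)=0$ for all $u\in M$, since your proof uses that $ad_z$ sends $\mathfrak{W}$ into $[\mathfrak{W},\mathfrak{W}]$; this holds in your application because $z\in[\mathfrak{W},\mathfrak{W}]\subseteq\mathfrak{W}\subseteq M$, so the argument goes through as written.
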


\begin{theorem}\cite[Theorem 3.7]{Albeverio+} %theo1.5.2
Let $L$ be a  Leibniz algebra. Then $L$
is solvable if and only if for all $x$ in $L$ and all $y$ in $\left[L,L\right]$,$tr\left(ad_{x}\circ ad_{y}\right)=0$. 
\end{theorem}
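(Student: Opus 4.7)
The plan is to reduce the Leibniz statement to the classical Lie-algebra Cartan criterion already recalled as Proposition \ref{prop151}, using the adjoint image $ad_L \subseteq \mathfrak{gl}(L)$ as the bridge. The key structural fact that makes this reduction work is that in a right Leibniz algebra, the Leibniz identity gives $[ad_x,ad_y]=ad_{[y,x]}$; consequently $[ad_L,ad_L]\subseteq \{ad_z : z\in [L,L]\}$. Combined with Lemma \ref{Solvabl} (which says $L$ is solvable iff $r_L=ad_L$ is a solvable Lie subalgebra of $\mathfrak{gl}(L)$), both implications of the theorem should fall out essentially immediately.

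For the ``$\Rightarrow$'' direction, I would start by assuming $L$ solvable. By Lemma \ref{Solvabl} applied to the adjoint representation, $ad_L$ is a solvable Lie subalgebra of $\mathfrak{gl}(L)$. Proposition \ref{prop151} then gives $tr(X\circ Y)=0$ for every $X\in ad_L$ and every $Y\in[ad_L,ad_L]$. Specializing $X=ad_x$ and, for $y\in[L,L]$, writing $y=\sum_i c_i[u_i,v_i]$ so that $ad_y = \sum_i c_i ad_{[u_i,v_i]} = \sum_i c_i [ad_{v_i},ad_{u_i}] \in [ad_L,ad_L]$, I would conclude $tr(ad_x\circ ad_y)=0$ for all $x\in L$, $y\in[L,L]$.

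For the ``$\Leftarrow$'' direction, assume the trace hypothesis. I need to verify the hypothesis of Proposition \ref{prop151} for the Lie subalgebra $ad_L$. Take arbitrary $X\in ad_L$ and $Y\in[ad_L,ad_L]$; then $X=ad_x$ with $x\in L$, and $Y=\sum_i c_i[ad_{x_i},ad_{y_i}]=\sum_i c_i\, ad_{[y_i,x_i]} = ad_z$, where $z=\sum_i c_i[y_i,x_i]\in[L,L]$. Hence $tr(X\circ Y)=tr(ad_x\circ ad_z)=0$ by hypothesis. Proposition \ref{prop151} then yields that $ad_L$ is solvable, and Lemma \ref{Solvabl} lifts this back to solvability of $L$.

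The main obstacle, and really the only non-cosmetic point, is establishing the identity $[ad_x,ad_y]=ad_{[y,x]}$ cleanly from the right Leibniz identity \eqref{jacobi}, since the order of $x,y$ is reversed compared with the Lie case and one must be careful that this reversal is harmless (it only permutes which elements of $L$ index the commutator subalgebra of $ad_L$, not which subspace they sweep out). Once that commutator formula is in hand, everything else is a transfer of statements between $L$ and its adjoint image, already handled by Lemma \ref{Solvabl}; no new analogue of Cartan's trace-vanishing argument has to be proved in the Leibniz setting, because Proposition \ref{prop151} is invoked directly on the Lie algebra $ad_L$.
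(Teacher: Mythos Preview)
The paper does not supply its own proof of this theorem; it is quoted from \cite{Albeverio+} without argument, so there is nothing in-paper to compare your proposal against. That said, your derivation is correct and is exactly the natural one given the tools already assembled here: Lemma~\ref{Solvabl} identifies solvability of $L$ with solvability of the Lie algebra $ad_L\subseteq\mathfrak{gl}(L)$, the commutator identity $[ad_x,ad_y]=ad_{[y,x]}$ (recorded in the proof of that lemma) gives $[ad_L,ad_L]=ad_{[L,L]}$, and Proposition~\ref{prop151} then converts the trace condition on $ad_L$ into the trace condition on $L$ in both directions.
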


%\section{} %1.6

If $\imath$ is an ideal of $L$ and $L/\imath$ is solvable %\textcolor{red}{
(respectively
nilpotent)%}
, then $D^{(n)}(L/\imath)=0$ %\textcolor{red}{
(respectively
$\left(L/\imath\right)^{n}=0$) %} 
 implies that $D^{(n)}(L)\subset\imath$
%\textcolor{red}{
(respectively $L^{n}\subset\imath$ nilpotent).  %}.
If
$\imath$ itself is solvable with $D^{(m)}(\imath)=0$ %\textcolor{red}{
(respectively nilpotent with $\imath^{m}=0$), %}
then $D^{(m+n)}(L)=0$  %\textcolor{red}{
(respectively
$L^{m+n}=0$).\\%}
So we have proved: 

\begin{proposition}\label{quot12}
If $\imath\subset L$ is an ideal, and both $\imath$ and $L/\imath$
are solvable (respectively nilpotent), so is $L$
solvable (respectively nilpotent). 
\end{proposition}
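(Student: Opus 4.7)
The plan is to argue exactly as the paragraph preceding the proposition already sketches, now packaged as a clean two-step inclusion argument using the derived series $D^{(n)}(L)$ and the lower central series $L^n$ together with their good behaviour under the quotient map $\pi:L\to L/\imath$.

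For the solvable case, I would first observe that $\pi$ is an algebra homomorphism, hence $\pi\bigl(D^{(n)}(L)\bigr)=D^{(n)}(L/\imath)$ for every $n$. Assuming $L/\imath$ solvable, pick $n$ with $D^{(n)}(L/\imath)=0$; this gives the inclusion $D^{(n)}(L)\subseteq\imath$. Next, assuming $\imath$ solvable with $D^{(m)}(\imath)=0$, apply $D^{(m)}$ to the previous inclusion: since the derived series is monotone under inclusion of ideals, we get
\[
D^{(m+n)}(L)=D^{(m)}\bigl(D^{(n)}(L)\bigr)\subseteq D^{(m)}(\imath)=\{0\},
\]
so $L$ is solvable.

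For the nilpotent case the argument is formally identical but uses the lower central series defined in item (ii) at the start of Section \ref{sec2}. Again $\pi(L^{n})=(L/\imath)^{n}$, so nilpotency of $L/\imath$ with $(L/\imath)^{n}=0$ yields $L^{n}\subseteq\imath$; then from $\imath^{m}=0$ and the fact that $L^{n}$ is an ideal of $L$ contained in $\imath$, a direct induction on the defining sum for $L^{n+k}$ (each bracket $[L^{i},L^{j}]$ with $i+j=n+k$ and $k\ge 0$ produces factors in $L^{n}\subseteq\imath$ once we iterate enough times) gives $L^{m+n}\subseteq \imath^{m}=\{0\}$.

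I do not expect a real obstacle: this is the standard quotient/ideal closure argument and the paper explicitly invokes it above the statement. The only small care-point, and the main thing worth writing out carefully, is the nilpotent step: one must verify that $L^{m+n}\subseteq \imath^{m}$, which is slightly subtler than the derived-series version because $L^{k}$ is built from a sum $[L^{1},L^{k-1}]+\cdots+[L^{k-1},L^{1}]$ rather than a single bracket. This is handled by induction on $m$, using that every summand in $L^{n+k}$ contains a factor from $L^{n}\subseteq\imath$, and that a product $[\imath^{s},L]\cup[L,\imath^{s}]$ stays inside $\imath^{s}$ since $\imath^{s}$ is an ideal.
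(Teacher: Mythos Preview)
Your solvable argument is correct and is exactly what the paper does: from $D^{(n)}(L/\imath)=0$ deduce $D^{(n)}(L)\subseteq\imath$, then $D^{(m+n)}(L)=D^{(m)}\bigl(D^{(n)}(L)\bigr)\subseteq D^{(m)}(\imath)=0$.

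The nilpotent argument, however, has a genuine gap, and in fact the nilpotent assertion in the proposition is \emph{false}. Take the two-dimensional Lie (hence Leibniz) algebra $L=Fx\oplus Fy$ with $[x,y]=y=-[y,x]$. The ideal $\imath=Fy$ is abelian, $L/\imath\cong F$ is abelian, yet $L^{k}=Fy$ for every $k\ge 2$, so $L$ is not nilpotent. The step in your proof that fails is the inclusion $L^{m+n}\subseteq\imath^{m}$. From $L^{n}\subseteq\imath$ you only get
\[
L^{n+1}=\sum_{i+j=n+1}[L^{i},L^{j}]\subseteq\imath,
\]
since in each summand at least one factor lies in $L^{n}\subseteq\imath$ (or in a higher $L^{k}\subseteq L^{n}$) and $\imath$ is an ideal; iterating this never improves the containment beyond $\imath$. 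Your own final remark already says precisely this: ``$[\imath^{s},L]\cup[L,\imath^{s}]$ stays inside $\imath^{s}$'' keeps you at $\imath^{s}$, not $\imath^{s+1}$. The derived-series argument succeeds because $D^{(k+1)}=[D^{(k)},D^{(k)}]$ brackets the ideal with itself, so the power of $\imath$ genuinely increases; the lower central series brackets with all of $L$, so it does not.

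For context, the paper's own one-line sketch makes the same unjustified claim ``$L^{m+n}=0$'' in the nilpotent case. Fortunately the proposition is invoked later only for its (correct) solvable part, so nothing downstream depends on the faulty half.
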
 

If $\imath$ and $\jmath$ are solvable ideals, then $(\imath+\jmath)/\jmath\equiv\imath/(\imath\cap\jmath)$
is solvable, being the homomorphic image of a solvable algebra. So,
by the previous propositio, we have the 

\begin{proposition} 
If $\imath$ and $\jmath$ are solvable ideals (respectively
nilpotent ideals) in $L$ so $\imath+\jmath$ is solvable (respectively
nilpotent). In particular, every Leibniz algebra $L$ has a largest
solvable ideal which contains all other solvable ideals and a largest
nilpotent ideal which contains all other nilpotent ideals.\\ 
The largest solvable one is denoted by $Rad\left(L\right)$.\\  %

The largest nilpotent one is denoted by $Nil\left(L\right)$. %
\end{proposition}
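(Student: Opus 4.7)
The plan is to reduce everything to Proposition~\ref{quot12} via a second-isomorphism-theorem argument. The key observation is that if $\imath$ and $\jmath$ are (two-sided) ideals in $L$, then $\imath+\jmath$ is an ideal of $L$, $\jmath$ is an ideal of $\imath+\jmath$, and the map $\imath \to (\imath+\jmath)/\jmath$ sending $a \mapsto a+\jmath$ is a surjective Leibniz-algebra homomorphism with kernel $\imath \cap \jmath$. Hence $(\imath+\jmath)/\jmath \cong \imath/(\imath \cap \jmath)$ as Leibniz algebras. I would first check that this isomorphism is well-defined (routine: $\jmath$ absorbs both left and right brackets with elements of $\imath+\jmath$, so the quotient carries a well-defined Leibniz structure).

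Once this isomorphism is in hand, the argument is essentially mechanical. Since $\imath$ is solvable (resp.~nilpotent) and the derived series $D^n$ (resp.~lower central series $L^n$) is preserved by surjective homomorphisms, the quotient $\imath/(\imath\cap\jmath) \cong (\imath+\jmath)/\jmath$ is itself solvable (resp.~nilpotent). We then apply Proposition~\ref{quot12} with the pair $\bigl(\jmath,\,(\imath+\jmath)/\jmath\bigr)$ inside the algebra $\imath+\jmath$: both the ideal $\jmath$ and the quotient are solvable (resp.~nilpotent), so $\imath+\jmath$ is solvable (resp.~nilpotent).

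For the second assertion, I would argue by induction on the number of summands to show that any finite sum of solvable (resp.~nilpotent) ideals of $L$ is solvable (resp.~nilpotent). Because $L$ is finite-dimensional, the sum $R$ of \emph{all} solvable ideals of $L$ stabilizes after finitely many terms, so it is itself a solvable ideal; by construction $R$ contains every solvable ideal of $L$, and we set $Rad(L) := R$. The same argument with ``nilpotent'' in place of ``solvable'' defines $Nil(L)$. The main obstacle is really only the verification of the second isomorphism theorem in the Leibniz (non-Lie) setting; everything else is a formal consequence of Proposition~\ref{quot12} together with the fact that homomorphic images preserve the derived and lower central series.
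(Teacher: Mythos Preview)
Your argument is line for line the paper's own (the sentence immediately preceding the proposition): second isomorphism theorem $(\imath+\jmath)/\jmath\cong\imath/(\imath\cap\jmath)$, then Proposition~\ref{quot12}. For the solvable case this is correct and nothing more need be said.

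The nilpotent case, however, has a genuine gap, and it is one you inherit from the paper. Proposition~\ref{quot12} is \emph{false} for nilpotency: the two-dimensional Lie algebra $L=Fx\oplus Fy$ with $[x,y]=y$ has the abelian ideal $Fy$ with abelian quotient, yet $L^{k}=Fy\neq 0$ for every $k\geq 2$. The purported proof of Proposition~\ref{quot12} claims that $L^{n}\subset\imath$ and $\imath^{m}=0$ force $L^{m+n}=0$, but $[L,L^{n}]\subset[L,\imath]$ lands only in $\imath$, not in $\imath^{2}$, so the induction does not close. What actually rescues the present proposition is the extra hypothesis that \emph{both} $\imath$ and $\jmath$ are ideals of $L$, which Proposition~\ref{quot12} never invokes. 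One correct route: show by induction on the length that any iterated bracket with $a$ factors from $\imath$ and $b$ factors from $\jmath$ lies in $\imath^{a}\cap\jmath^{b}$ (using that each $\imath^{a}$ is an ideal of $L$ and $[\imath^{a},\imath^{a'}]\subset\imath^{a+a'}$); then $(\imath+\jmath)^{p+q-1}=0$ whenever $\imath^{p}=\jmath^{q}=0$, by pigeonhole on the factors. Your reduction to Proposition~\ref{quot12} should be replaced by this (or an equivalent) argument in the nilpotent half.
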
 
%\begin{comment}
%or simply by $N$ when $L$ is fi{}xed
%\end{comment}
\begin{remark}
 Note that $\Ess(L)\subseteq Nil\left(L\right)\subseteq Rad\left(L\right)$.
\end{remark}

\section{The ideal $\{Rad(L),L\}$.}\label{sec4} %1.7

Let us denote the subspace $\left[Rad(L),L\right]\dot{+}\left[L,Rad(L)\right]$ by $\left\{Rad(L),L\right\}$.

\begin{lemma} Let $L$ be a Leibniz algebra and $\left(l,r,V\right)$
a representation of $L$. Let $A$  be a subspace of $L$ for which
there exists an integer $n\in\mathbb{N}^{*}$ with $ r_A^{n}=\left\{ 0\right\}$
and let $x$ be in $n_{L}(A)$ such that $r_{x}$ is nilpotent. Then there
exists an integer $N\in\mathbb{N}^{*}$ with $\left(r_{A+Fx}\right)^{N}=\left\{ 0\right\}$.
\end{lemma}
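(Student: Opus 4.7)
The plan is to construct a decreasing flag
\[
V=V_0\supseteq V_1\supseteq\cdots\supseteq V_n=\{0\}
\]
of $r_{A+Fx}$-stable subspaces of $V$, on whose graded pieces the action reduces to a scalar multiple of $r_x$, and then exploit that $r_x$ is nilpotent. The natural candidate is $V_k:=r_A^{k}(V)$; the hypothesis $r_A^{n}=\{0\}$ yields $V_n=\{0\}$, and a quick induction shows $V_{k+1}\subseteq V_k$ (the case $k=0$ is clear, and $V_{k+1}=r_A(V_k)\subseteq r_A(V_{k-1})=V_k$), so each $V_k$ is automatically $r_A$-stable, since $r_A(V_k)=V_{k+1}\subseteq V_k$.

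The heart of the argument is showing that $V_k$ is also $r_x$-stable. Because $x\in n_L(A)$ we have $[a,x]\in A$ for every $a\in A$, and so the identity $r_x r_a = r_a r_x + r_{[a,x]}$ lives inside $r_A r_x\dot{+}r_A$. Iterating this $k$ times gives
\[
r_x\, r_A^{k}\subseteq r_A^{k}\, r_x\dot{+}r_A^{k},
\]
which is precisely the kind of bookkeeping already performed in the $\beta_k,\delta_k$ computations of the preceding lemmas; applied to $V$, it yields $r_x(V_k)\subseteq V_k$.

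With stability in hand, I pass to the quotients $V_k/V_{k+1}$. Every $r_a$ with $a\in A$ acts trivially there, since $r_a(V_k)\subseteq r_A(V_k)=V_{k+1}$, so for any $y=a+\lambda x\in A+Fx$ the induced operator on $V_k/V_{k+1}$ is $\lambda\,\overline{r_x}$. Fixing $p\in\mathbb{N}^{*}$ with $r_x^{p}=0$, this forces
\[
(r_{A+Fx})^{p}(V_k)\subseteq V_{k+1},
\]
and iterating through the flag $n$ times gives $(r_{A+Fx})^{np}(V)\subseteq V_n=\{0\}$, so $N:=np$ does the job.

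The only real obstacle is the $r_x$-stability of the $V_k$, i.e.\ propagating the commutation $r_x r_a = r_a r_x + r_{[a,x]}$ through $k$ factors of $r_A$. A mild additional point is that the conclusion $(r_{A+Fx})^{p}(V_k)\subseteq V_{k+1}$ is an assertion about the subspace product $(r_{A+Fx})^{p}\subseteq End_F(V)$ applied to $V_k$, so one should verify that the reduction $\overline{r_y}=\lambda\overline{r_x}$ on $V_k/V_{k+1}$ holds uniformly over $y=a+\lambda x\in A+Fx$; this is immediate from the definitions.
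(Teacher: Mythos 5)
Your argument is correct, but it follows a genuinely different route from the paper's. The paper first uses Lemma~\ref{coefbin} together with the estimate $E_p^{\,n}\subseteq r_A^{n}r_x^{np}\dot{+}\cdots\dot{+}r_A^{n}$ to show that each individual operator $r_{a+\lambda x}$ is nilpotent of bounded index (taking $p=2\sup(m,n)+1$ so that every binomial term $r_a^{\,p-k}r_x^{\,k}$ vanishes), and then passes from element-wise nilpotency to $\left(r_{A+Fx}\right)^{N}=\{0\}$ by citing Schafer's theorem on nil associative algebras. You instead build the flag $V_k=r_A^{k}(V)$, check it is stable under $r_{A+Fx}$ --- the only nontrivial point being $r_x\,r_A^{k}\subseteq r_A^{k}r_x\dot{+}r_A^{k}$, a subspace-product analogue of the paper's $\delta_k$ computation, which there is carried out only for powers of a single $r_a$, so do write out your one-line induction explicitly --- and observe that every element of $r_{A+Fx}$ acts on each quotient $V_k/V_{k+1}$ as a scalar multiple of $\overline{r_x}$, whence $\left(r_{A+Fx}\right)^{p}$ lowers the flag by one step and $N=np$ works. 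Your version buys an explicit bound on $N$, dispenses with the binomial bookkeeping of Lemma~\ref{coefbin}, and is self-contained: in particular it sidesteps the delicate final step of the paper's proof, since knowing that every element of the subspace $r_{A+Fx}$ is nilpotent does not by itself make the associative algebra it generates nil (the subspace need not be closed under multiplication), so the appeal to Schafer's theorem needs extra justification that your flag supplies for free. What the paper's route buys is the systematic reuse of the lemmas of Section~\ref{sec2}, which were developed precisely with this application in mind.
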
 

\begin{proof} 
%Replacing $L$ by $r\left(L\right)$, we may whithout loss of generallity
%assume that $L\subset gl(V)$.

Let us notice that for any  non negative integer $p$ we have\\
$\left(r_{a+\lambda x}\right)^{p}=\ds\sum_{k=0}^{p}\left(\hspace{-5pt}\begin{array}{c}
p\\
k
\end{array}\hspace{-5pt}\right)\lambda^{k} r_x^{k}\left(r_{a}\right)^{p-k}+f_p$ where $f_p\in E_p=r_{A}r_{x}^{p}\dot{+}\cdots\dot{+}r_{A}.$

Let $m$ an integer with $\left(r_{x}\right)^{m}=0$. Then 
with $p=2\sup\left(m,n\right)+1>m+n$ we have that 
$\left(r_{a+\lambda x}\right)^{p}=f_p\in E_p.$
And so 
\[
\begin{array}{lll}
\left[\left(r_{a+\lambda x}\right)^{p}\right]^n&=&\left(f_p\right)^n=\left(r_{A}r_{x}^{p}\dot{+}\cdots\dot{+}r_{A}\right)^n\\
%&=&\left(r_{A}r_{x}^{p}\dot{+}\cdots\dot{+}r_{A}\right)^n\\
&\subseteq& r_A^{n}r_{x}^{np}\dot{+}\cdots\dot{+}%
 r_A^{n}r_{x}^{2p}\dot{+}\cdots%\]
%\[
%\hphantom{aaaaaaaaaaaaaaaaaaaaaaaaaaaaaaaaaaa}
\dot{+} r_A^{n}r_{x}\dot{+} r_A^{n}
\end{array}
\]

Since $ r_A^{n}=\{0\}$, $\left(r_{a+\lambda x}\right)^{pn}=0$. So $r_{a+\lambda x}$  is nilpotent for all $a+\lambda x$ in $A\dot{+}Fx$.
By \cite[Theorem 3.2., page 41]{Schaf}  the associative algebra $r_{A\dot{+}Fx}$ 
is nilpotent algebra. So there is some integer $N\in\mathbb{N}^*$ such that $\left(r_{A+Fx}\right)^{N}=\left\{ 0\right\}$.
\end{proof} 

\begin{proposition} For any representation $\left(l,r,V\right)$
of the Leibniz algebra $L$, the restriction of $r$ to the ideal
$\left\{Rad(L),L\right\}$ is nilpotent, i.e. there exists an integer
$m\in\mathbb{N}^{*}$ with $\left(r_{\left\{Rad(L),L\right\}}\right)^{m}=\left\{ 0\right\} $.
\end{proposition}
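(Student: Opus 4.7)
The plan is to reduce the proposition to the statement that every $r_y$ with $y\in I:=\{Rad(L),L\}$ is a nilpotent endomorphism of $V$, and then to upgrade this pointwise nilpotency to associative nilpotency of $r_I\subseteq End_F(V)$. Before starting, I record two structural observations: by Lemma \ref{lemma11}, $r_L$ is a Lie subalgebra of $gl(V)$ and, since $Rad(L)$ is solvable, $r_{Rad(L)}$ is a solvable Lie subalgebra of $gl(V)$; moreover, because $Rad(L)$ is a two-sided ideal of $L$ and $r_{[x,y]}=[r_y,r_x]$, $r_{Rad(L)}$ is in fact a Lie ideal of $r_L$. Similarly, the Leibniz identity (\ref{jacobi}) together with the fact that $Rad(L)$ is an ideal yields that $I$ itself is a two-sided ideal of $L$, so $r_I$ is again a Lie subalgebra of $gl(V)$.

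For the pointwise nilpotency, I write a typical $y\in I$ as a sum of brackets $[s,x]$ and $[x',s']$ with $s,s'\in Rad(L)$ and $x,x'\in L$, so that
\[
r_y=[r_s,r_x]-[r_{s'},r_{x'}]\in [r_L,r_{Rad(L)}].
\]
I would then invoke the classical consequence of Lie's theorem: in characteristic zero, whenever $\mathfrak r$ is a solvable Lie ideal of a Lie subalgebra $\mathfrak g\subseteq gl(V)$, every element of $[\mathfrak g,\mathfrak r]$ is a nilpotent operator on $V$. Applied to $\mathfrak g=r_L$ and $\mathfrak r=r_{Rad(L)}$, this yields nilpotency of each $r_y$ for $y\in I$. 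Concretely, Lie's theorem trigonalizes $r_{Rad(L)}$ with weights $\lambda_i\colon Rad(L)\to F$, and the standard invariance-of-weights computation (which needs characteristic zero) forces every $\lambda_i$ to vanish on both $[L,Rad(L)]$ and $[Rad(L),L]$, so each such $r_y$ is strictly upper triangular in that basis.

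With pointwise nilpotency in hand, I would close the argument by applying Engel's theorem to the Lie subalgebra $r_I\subseteq gl(V)$: there is a single basis of $V$ in which every $r_y$ with $y\in I$ is strictly upper triangular, so $(r_I)^{\dim V}=\{0\}$. Alternatively, and more in the spirit of the preceding lemma, one can choose a basis $y_1,\dots,y_m$ of $I$ refining a chain of $L$-stable subspaces of $I$ (which exists once $r$ is known to act nilpotently on $I$) and iterate the preceding lemma, starting from $A_0=0$ and passing from $A_i$ to $A_i\dot{+}Fy_{i+1}$; normalization is automatic because each $A_i$ is $L$-stable, and each $r_{y_{i+1}}$ is nilpotent by the previous paragraph.

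The principal obstacle is the pointwise-nilpotency step: the invariance-of-weights argument must be carried out for the solvable Lie ideal $r_{Rad(L)}\subseteq r_L$, and one has to check that this classical Lie-algebra computation survives the asymmetry of the Leibniz bracket and the fact that $r$ is only an anti-homomorphism. Once this is confirmed, the passage from pointwise to associative nilpotency is routine, either via Engel or via the preceding lemma.
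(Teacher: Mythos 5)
Your proof is correct, but it takes a genuinely different route from the paper's. The paper stays inside its Leibniz machinery: it quotes \cite[Corollary 4.4]{bere12} (any representation of a solvable Leibniz algebra $B$ acts nilpotently on $[B,B]$), then runs a maximality argument --- take $T\subseteq\left\{Rad(L),L\right\}$ containing $\left[Rad(L),Rad(L)\right]$, maximal with $r$ nilpotent on $T$; if some $[x,y]$ or $[y,x]$ with $x\in Rad(L)$ escapes $T$, the solvability of $Rad(L)\dot{+}Fy$ (Proposition \ref{quot12}) and the cited corollary make $r_{[x,y]}$ nilpotent, and the lemma immediately preceding the proposition (which rests on the $E_k^p$ and binomial lemmas of Section 2 plus \cite{Schaf}) extends nilpotency to $T\oplus F[x,y]$, contradicting maximality. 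You instead transfer everything to classical Lie theory inside $gl(V)$: since $r_{[x,y]}=[r_y,r_x]$ and $Rad(L)$ is a solvable two-sided ideal (Lemma \ref{lemma11} and Section \ref{sec3}), $r_{Rad(L)}$ is a solvable Lie ideal of $r_L$, every $r_y$ with $y\in\left\{Rad(L),L\right\}$ lies in the span of $[r_L,r_{Rad(L)}]$, the Lie's-theorem/invariance-of-weights argument (characteristic zero) makes each such $r_y$ nilpotent, and Engel for the Lie subalgebra $r_{\left\{Rad(L),L\right\}}$ gives $\bigl(r_{\left\{Rad(L),L\right\}}\bigr)^{\dim V}=\{0\}$. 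Your flagged worry about the asymmetry of the Leibniz bracket is unfounded: the whole computation lives in $gl(V)$ with honest Lie algebras, and the only Leibniz inputs are the anti-homomorphism identity and the ideal/solvability properties of $Rad(L)$, all available in the paper; moreover the $r_L$-composition-series flag already makes every element of the span of $[r_L,r_{Rad(L)}]$ map each term of the flag into the previous one, so associative nilpotency follows directly even without invoking Engel. Of your two proposed endings, prefer this Engel/flag one; the alternative of iterating the paper's preceding lemma along a basis of the ideal still requires constructing a chain of subspaces normalized at each step, which you have not supplied. In sum, the paper's proof is self-contained within its own Leibniz lemmas and reuses \cite{bere12}, while yours is shorter and bypasses the technical lemmas of Section \ref{sec2} and the \cite{Schaf} citation at the cost of importing the classical facts for the image algebras in $gl(V)$.
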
 

\begin{proof} %Let $R=Rad\left(L\right)$ and $A=\left\{Rad(L),L\right\}=\left\{R,L\right\}$
According to \cite[Corollary 4.4]{bere12} %ref(Corollary 1.3.3 ) 
the representation of $V$ is nilpotent
on the ideal $\left[L,L\right]$. Now let $T\subseteq\left\{Rad(L),L\right\}$
be a subspace containing $\left[Rad(L),Rad(L)\right]$, which is maximal with
respect to the property that the representation of $V$ is nilpotent
on $T$. Note that $T$ always is an ideal of $Rad\left(L\right)$,
hence in particular a subalgebra, because it contains $\left[Rad(L),Rad(L)\right]$.
%the commutator algebra of  $Rad(L)$.

Assume that $T\neq\left\{Rad(L),L\right\}$. Then there exist at least an $x$
in $Rad(L)$ and $y$ in $L$ with $\left[x,y\right]\notin T$ or $\left[y,x\right]\notin T$. 
%$\begin{cases}
% If $\left[x,y\right]\notin T$, 

% If \end{cases}$
\begin{description}
  \item[If]  $\left[x,y\right]\notin T$,
%%%%%%%%%%%%%%%%%%%%%%%%%%%%
the subspace $B=Rad\left(L\right)\dot{+}Fx$ is a subalgebra of $L$, $Rad\left(L\right)$
is a solvable ideal of $B$ and $B/Rad\left(L\right)\approx F$ is
abelian. Therefore $B$ is a solvable ideal by Proposition \ref{quot12}.\\
Again we use \cite[Corollary 4.4]{bere12} %ref( 1.3.3 ) 
to see that the representation of $V$ is nilpotent on $\left[B,B\right]$ and hence that $r_{\left[x,y\right]}$
is nilpotent.

Since $T\subseteq Rad(L)$ and $\left[x,y\right]\in\left[Rad\left(L\right),y\right]\subseteq Rad\left(L\right)$,
we have\\ $\left[\left[x,y\right],T\right]\subseteq\left[Rad\left(L\right),T\right]\subseteq T$ and $\left[T,\left[x,y\right]\right]\subseteq\left[T,Rad\left(L\right)\right]\subseteq T$.\\
Finally the preceding lemma  show that the representation of $V$
is nilpotent on the subspace $T\oplus F\left[x,y\right]$. This contradicts
the maximality of $T$.

  \item[If]  $\left[y,x\right]\notin T$,
%%%%%%%%%%%%%%%%%%%%%%%%%%%%
the subspace $B=Rad\left(L\right)\dot{+}Fx$ is a subalgebra of $L$, $Rad\left(L\right)$
is a solvable ideal of $B$ and $B/Rad\left(L\right)\approx F$ is
abelian. Therefore $B$ is a solvable ideal by Proposition \ref{quot12}.\\
Again we use \cite[Corollary 4.4]{bere12}  %ref(Corollary 1.3.3 ) 
to see that the representation on $V$ is nilpotent on $\left[B,B\right]$ and hence that $r_{\left[y,x\right]}$
is nilpotent.

Since $T\subseteq Rad(L)$ and $\left[y,x\right]\in\left[y,Rad\left(L\right)\right]\subseteq Rad\left(L\right)$,
we have\\ $\left[\left[y,x\right],T\right]\subseteq\left[Rad\left(L\right),T\right]\subseteq T$ and $\left[\left[y,x\right],T\right]\subseteq\left[Rad\left(L\right),T\right]\subseteq T$.\\
Finally the preceding lemma  show that the representation of $V$
is nilpotent on the subspace $T\oplus F\left[x,y\right]$. This contradicts
the maximality of $T$.
 \end{description}
 We conclude that $T$ must be equal to $\left\{Rad(L),L\right\}$,
so the representation of $V$ is nilpotent on $\left\{Rad(L),L\right\}$.
\end{proof} 

Applying the precedent proposition to the adjoint representation $\left(Ad,ad,L\right)$
of the Leibniz algebra $L$ and using Engel's Theorem \cite{Barnes1}, we get the:

\begin{corollary}The ideal $\left\{Rad(L),L\right\}$ is nilpotent.
In particular, $x$ is ad-nilpotent for every $x$ in $\left\{Rad(L),L\right\}$.
\end{corollary}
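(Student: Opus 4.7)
The plan is to apply the preceding proposition with $V=L$ and with $(l,r,V)$ taken to be the adjoint representation $(Ad, ad, L)$. Since that proposition is stated for an arbitrary representation of $L$, no new machinery is required; the substance of the corollary is packaged as an instance of the proposition.

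First I would specialize the proposition: there exists $m \in \mathbb{N}^*$ such that $(ad_{\{Rad(L),L\}})^m = \{0\}$ as an associative algebra of endomorphisms of $L$. In particular, for every $x \in \{Rad(L),L\}$ we have $ad_x^m = 0$, so $x$ is ad-nilpotent. This immediately yields the second assertion of the corollary.

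For the first assertion I would consider $I := \{Rad(L),L\}$ as a Leibniz algebra in its own right. For any $x \in I$, the endomorphism $ad_x \in \mathrm{End}_F(L)$ is nilpotent, and since $I$ is an ideal of $L$ it is stable under $ad_x$; hence the restriction $(ad_x)_{|I}$ is still nilpotent, i.e.\ $x$ is ad-nilpotent within the subalgebra $I$. Engel's theorem for Leibniz algebras (the reference \cite{Barnes1} cited right before the statement) then concludes that $I$ is a nilpotent Leibniz algebra.

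The main obstacle, if any, is purely bookkeeping: checking that the hypothesis of Engel's theorem is the correct one to invoke (pointwise ad-nilpotence on $I$, which is exactly what the proposition supplies). No nontrivial computation remains, since the hard analytic input, namely the uniform bound $(r_{\{Rad(L),L\}})^m = 0$ for arbitrary $r$, has already been established. The corollary is thus essentially a translation of the proposition into the language of the adjoint action combined with Engel's theorem.
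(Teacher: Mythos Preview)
Your proposal is correct and follows exactly the route the paper indicates: specialize the preceding proposition to the adjoint representation $(Ad,ad,L)$ to obtain ad-nilpotence of every element of $\{Rad(L),L\}$, then invoke Engel's theorem \cite{Barnes1} to conclude that the ideal itself is nilpotent. The paper's proof is precisely this one-line reduction, and you have merely made the bookkeeping explicit.
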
 

%\hspace{2cm}
%%%%%%%%%%%%%%%%%%%%%%%%%%%%%%%%%%%%%%%%%%

\begin{corollary} %corol1.7.5
Let $L$ be a Leibniz algebra and $D$ a derivation
of $L$.\\ 
Then $D\left(Rad(L)\right)\subseteq Nil(L)$. In particular
$Nil(L)$ is a characteristical ideal.
\end{corollary}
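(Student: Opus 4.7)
The plan is to reduce to the classical Lie-algebra version of the statement by passing to the quotient $\bar L := L/\Ess(L)$, which is a Lie algebra. Two facts make this reduction immediate: $\Ess(L)$ is abelian, since the Leibniz identity $[y,[x,x]]=0$ forces $[L,\Ess(L)]=0$ and hence $[\Ess(L),\Ess(L)]=0$, so $\Ess(L)$ is a nilpotent ideal; and by Proposition~\ref{quot12} (extensions of nilpotent/solvable by nilpotent/solvable are nilpotent/solvable) one obtains the correspondences $Rad(\bar L)=Rad(L)/\Ess(L)$ and $Nil(\bar L)=Nil(L)/\Ess(L)$.

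First I would check that $D(\Ess(L))\subseteq\Ess(L)$: since $\Ess(L)$ is spanned by the squares, the identity $D([x,x])=[D(x),x]+[x,D(x)]$ together with the polarization $[a,b]+[b,a]=[a+b,a+b]-[a,a]-[b,b]$ lands the right-hand side inside $\Ess(L)$. Hence $D$ induces a derivation $\bar D$ of the Lie algebra $\bar L$. The classical Lie theorem then yields $\bar D(Rad(\bar L))\subseteq Nil(\bar L)$, and lifting via the correspondence above gives $D(Rad(L))\subseteq Nil(L)+\Ess(L)=Nil(L)$, since $\Ess(L)\subseteq Nil(L)$. The characteristical statement follows at once from $Nil(L)\subseteq Rad(L)$: for any derivation $D$ one has $D(Nil(L))\subseteq D(Rad(L))\subseteq Nil(L)$.

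The main conceptual obstacle is the temptation to avoid the Lie reduction and argue purely inside Leibniz theory using only the previous corollary that $\{Rad(L),L\}$ is nilpotent. A direct attempt would try to show that the ideal $D(Rad(L))+\{Rad(L),L\}$ is nilpotent; verifying it is an ideal via the derivation rule is straightforward, but controlling its nilpotency modulo $\{Rad(L),L\}$ seems to require $D(Rad(L))\subseteq Rad(L)$ as an input, which is circular. The usual Lie semidirect-product trick $L\rtimes FD$ also fails in the Leibniz category, because well-definedness of the bracket on the extension would require the antisymmetry $[y,D(z)]+[D(z),y]=0$ which only holds modulo $\Ess(L)$. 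Quotienting by $\Ess(L)$ is the canonical way to bypass precisely this obstacle.
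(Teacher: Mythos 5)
Your reduction to the Lie algebra $\overline L=L/\Ess(L)$ breaks down at the lifting step, because the correspondence $Nil(\overline L)=Nil(L)/\Ess(L)$ that you invoke is false. The nilpotent half of Proposition~\ref{quot12} cannot be used here: nilpotency is not stable under extensions, and an ideal $N\supseteq\Ess(L)$ with $N/\Ess(L)$ nilpotent need not be nilpotent. The paper's own Example~\ref{examp} already refutes your correspondence: for $L=\mathbb{C}x\oplus\mathbb{C}y$ with $[x,y]=x$ one has $\Ess(L)=\mathbb{C}x$, so $\overline L$ is one-dimensional abelian and $Nil(\overline L)=\overline L$, while $Nil(L)=\mathbb{C}x$ (since $L^{k}=\mathbb{C}x$ for all $k\ge 2$, $L$ itself is not nilpotent). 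Hence $\pi^{-1}(Nil(\overline L))=L\not\subseteq Nil(L)$, and from $\overline D(Rad(\overline L))\subseteq Nil(\overline L)$ you can only conclude $D(Rad(L))\subseteq\pi^{-1}(Nil(\overline L))$, which in this example is the vacuous statement $D(L)\subseteq L$; the corollary's claim $D(L)\subseteq\mathbb{C}x$ is true there (one checks every derivation satisfies $D(x)\in\mathbb{C}x$, $D(y)=0$), but your argument does not deliver it. The preliminary steps are fine — $\Ess(L)$ is abelian, $D(\Ess(L))\subseteq\Ess(L)$ by polarization, and $Rad(\overline L)=Rad(L)/\Ess(L)$ does hold because solvability (unlike nilpotency) is closed under extensions — but the essential nilpotency information is destroyed by passing to $L/\Ess(L)$ and must be obtained at the level of $L$ itself.

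For comparison, the paper does not quotient at all: it forms the one-dimensional extension $\tilde L=L\times|_{D}F$ with bracket $[(x,t),(y,l)]=(lD(x)-tD(y)+[x,y],0)$, so that $D$ becomes a right multiplication in $\tilde L$, and then applies the preceding corollary that $\{Rad(\tilde L),\tilde L\}$ is nilpotent, giving $D(Rad(L))\subseteq Nil(\tilde L)\cap L=Nil(L)$. Your closing observation about this construction is pertinent: the Leibniz identity for $\tilde L$ produces an obstruction term proportional to $[y,D(z)]+[D(z),y]$, which in a Leibniz algebra lies in $\Ess(L)$ but need not vanish, so the paper's extension requires justification as written. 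However, the correct repair is not the Lie quotient alone; one must still feed in an Engel-type nilpotency statement (such as the corollary on $\{Rad(L),L\}$, or a suitably corrected extension argument) at the Leibniz level, since as the example shows, all such information is lost in $L/\Ess(L)$.
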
 

\begin{proof}
For a derivation $D$ of $L$, define the Leibniz
algebra $\tilde{L}=L\times\!|_{D}F$ with the bracket 
$
\left[\left(x,t\right),\left(y,l\right)\right]=\left(lD\left(x\right)-tD\left(y\right)+\left[x,y\right],0\right).
$
Then, 
$
(D\left(Rad(L)\right),0)=\left[\left(Rad(L),0\right)\left(0,1\right)\right]\subseteq (L,0)\cap\left[Rad(\tilde{L}),\tilde{L}\right]\subseteq \tilde{L}\cap Nil\left(\tilde{L}\right)\subseteq Nil\left(\tilde{L}\right)=%
(nil\left(L\right),0).
$
So $D\left(Rad(L)\right)\subseteq Nil\left(L\right)$.
\end{proof}

\section{Main theorem.}\label{sec5}%1.8

We deal in this section with Leibniz algebras which sastify equation
\[
\forall x,y\in L,tr\left(ad_{x}\circ ad_y\right)_{|\Ess\left(L\right)}= 0\]

Call such  Leibniz algebras: Killing Leibniz Algebras.

A bilinear form $(-,-):L\times L\longrightarrow F$ is called invariant
if 
\[
([x,y],z)+(y,[x,z])=0
\]
 for all $x,y,z$ in $L$. Notice that if $(-,-)$ is an invariant
form, and $\imath$ is an ideal, then its orthogonal $\imath^{\perp}$
is again an ideal. 

One way of producing invariant forms is from representations: if $(l,r,V)$
is a representation of $L$, then
\[
(x,y)_{r}=tr(r_{x}\circ r_{y})
\]
 is invariant. Indeed, 
\[
([x,y],z)_{r}+(y,[x,z])_{r}
\]
\[
=tr\left((r_{y}\circ r_{x}-r_{x}\circ r_{y})\circ r_{z}+r_{y}\circ(r_{z}\circ r_{x}-r_{x}\circ r_{z})\right)
\]
\[
=tr\left(\left(r_{y}\circ r_{z}\right)\circ r_{x}-r_{x}\circ\left(r_{y}\circ r_{z}\right)\right)
=0\]
 
%\[
%=0.
%\]

In particular, if we take $l=Ad,\, r=ad$, $V=L$ the corresponding
bilinear form is called the Killing form and will be denoted by $\mathfrak{K}=(-,-)_\mathfrak{K}$. 

\begin{remark}
for all $x$ in $\Ess(L)$, $y$, $z$ in $L$ we have:\\
$(ad_{x}\circ ad_{y})(z)=(ad_{x})([z,y])=[[z,y],x]=0.$\\%u\in\Ess(L)$\\
%$(ad_{x}\circ ad_{y})^2(z)=(ad_{x}\circ ad_{y})(u)=[[u,y],x]=0$\\
Then $ad_{x}\circ ad_{y}\equiv0 $  and
$(x,y)_{\mathfrak{K}}=tr(ad_{x}\circ ad_{y})=0$, so $\Ess(L)\subseteq\ker(\mathfrak{K})$. %is 
\end{remark}

\begin{theorem}\label{sol1} 
Let $L$ be a leibniz algebra of a class Killing Leibniz Algebras and  $\ker(\mathfrak{K})$ the kernel of its Killing form.\\% is $\Ess(L)$.
 $\ker(\mathfrak{K})=\Ess(L)$  if and only if $L$ is semisimple.
\end{theorem}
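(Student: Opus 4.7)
The plan is to reduce to the classical Cartan criterion by passing to the Lie algebra quotient $\bar L := L/\Ess(L)$. From the basic identity $[y,[x,x]]=0$ one deduces $[x,y]+[y,x]\in \Ess(L)$, so $\bar L$ is indeed a Lie algebra; denote by $\pi:L\to \bar L$ the projection and by $\mathfrak{K}_{\bar L}$ the Killing form of $\bar L$. The key intermediate claim is the trace identity
\[
\mathfrak{K}(x,y)=\mathfrak{K}_{\bar L}(\bar x,\bar y)\quad \textnormal{for all }x,y\in L.
\]
Once this is in hand, the theorem follows quickly from Cartan's criterion for Lie algebras together with the identification $Rad(\bar L)=Rad(L)/\Ess(L)$.

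To establish the identity, I exploit that $\Ess(L)$ is a two-sided ideal, hence stable under every $ad_z$ and in particular under $ad_x\circ ad_y$. Fixing a vector-space decomposition $L=\Ess(L)\dot{+} M$, the operator $ad_x\circ ad_y$ is block upper-triangular in an adapted basis, so
\[
\mathfrak{K}(x,y)=tr\bigl((ad_x\circ ad_y)_{|\Ess(L)}\bigr)+tr_{\bar L}(ad_{\bar x}\circ ad_{\bar y}),
\]
because the operator induced on the quotient $L/\Ess(L)=\bar L$ is exactly the composition of the adjoint actions of $\bar x$ and $\bar y$ in $\bar L$. The Killing Leibniz hypothesis is precisely the vanishing of the first summand, and the remaining summand is by definition $\mathfrak{K}_{\bar L}(\bar x,\bar y)$. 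This immediately gives $\ker(\mathfrak{K})=\pi^{-1}(\ker(\mathfrak{K}_{\bar L}))$.

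Next, I verify the radical correspondence. Since $\Ess(L)\subseteq Nil(L)$ is solvable, Proposition \ref{quot12} applied to the short exact sequence $0\to \Ess(L)\to \pi^{-1}(Rad(\bar L))\to Rad(\bar L)\to 0$ shows $\pi^{-1}(Rad(\bar L))$ is solvable, hence contained in $Rad(L)$; the reverse inclusion is clear because $\pi(Rad(L))$ is a solvable ideal of $\bar L$. Therefore $L$ is semisimple in the sense of the paper, i.e.\ $Rad(L)=\Ess(L)$, if and only if $Rad(\bar L)=0$, if and only if $\bar L$ is semisimple as a Lie algebra. By Cartan's criterion the latter is equivalent to $\ker(\mathfrak{K}_{\bar L})=0$, and by the preceding paragraph this is equivalent to $\ker(\mathfrak{K})=\Ess(L)$.

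The main obstacle is the trace identity $\mathfrak{K}(x,y)=\mathfrak{K}_{\bar L}(\bar x,\bar y)$: this is the single place where the Killing Leibniz hypothesis enters, and everything else (the radical-quotient correspondence via Proposition \ref{quot12}, and Cartan's criterion for the Lie algebra $\bar L$) is routine once one has set up the quotient $\bar L$ correctly.
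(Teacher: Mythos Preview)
Your proof is correct, and it takes a genuinely different route from the paper's.

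The paper handles the two implications separately. For ``$L$ semisimple $\Rightarrow \ker(\mathfrak{K})=\Ess(L)$'' it shows directly that $\mathfrak{W}=\ker(\mathfrak{K})$ is an ideal and then applies Cartan's solvability criterion (Proposition~\ref{prop151}) to the Lie subalgebra $ad_{\mathfrak{W}}\subseteq\mathrm{End}(L)$ to conclude $\mathfrak{W}$ is solvable, hence $\mathfrak{W}\subseteq Rad(L)=\Ess(L)$; the Killing Leibniz hypothesis is not even invoked here. For the converse, the paper picks an ideal $a$ with $a^{2}\subseteq\Ess(L)\subsetneq a$ (Remark~\ref{ari}), observes that $\sigma=ad_x\circ ad_y$ with $x\in a$ satisfies $\sigma^{2}(L)\subseteq\Ess(L)$, and then uses Lemma~\ref{tras} together with the Killing Leibniz hypothesis to get $tr(\sigma)=tr(\sigma_{|\Ess(L)})=0$, so $a\subseteq\ker(\mathfrak{K})$.

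Your argument instead packages everything into the single trace identity $\mathfrak{K}(x,y)=\mathfrak{K}_{\bar L}(\bar x,\bar y)$ on the Lie quotient $\bar L=L/\Ess(L)$, which isolates the Killing Leibniz hypothesis as precisely the condition that the Leibniz Killing form descends to the Lie Killing form. After that, both directions follow at once from the classical Cartan theorem for $\bar L$ and the radical correspondence $Rad(\bar L)=Rad(L)/\Ess(L)$ (which you justify correctly via Proposition~\ref{quot12}). This is more conceptual and explains \emph{why} the hypothesis is the right one; the paper's approach is more computational but avoids introducing the quotient and relies instead on the bespoke trace Lemma~\ref{tras}. Both arguments use the same underlying ingredients (invariance of $\Ess(L)$ under $ad_z$, Cartan's criterion, block-triangular trace decomposition), just organised differently.
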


\begin{proof} 
Suppose that $L$ is semisimple. 
Let us show that the kernel of the Killing form is $\Ess(g)$.\\
So let $\mathfrak{W}=L^{\perp}=\left\{ x\in L,\, tr\left(ad_{x}\circ ad_{y}\right)=0\textnormal{ for all }y\in L\right\} $.
If $x\in\mathfrak{W}$, $y,z\in L$ then

$tr\left(ad_{[x,z]}\circ ad_{y}\right)=tr\left(ad_{x}\circ ad_{z}\circ ad_{y}-ad_{z}\circ ad_{x}\circ ad_{y}\right)=tr\left(ad_{x}\circ\left(ad_{z}\circ ad_{y}-ad_{y}\circ ad_{z}\right)\right)$\\
$\hphantom{tr\left(ad_{[x,z]}\circ ad_{y}\right)}=tr\left(ad_{x}\circ ad_{\left[z,y\right]}\right)=0,$\par 
%
%$\hphantom{tr\left(ad_{[x,z]}\circ ad_{y}\right)}=0,$
%
And so on, we have also $tr\left(ad_{[z,x]}\circ ad_{y}\right)=0$.

So $\mathfrak{W}$ is an ideal and clearly $\Ess(L)\subseteq\mathfrak{W}$.\\
$ad_{\mathfrak{W}}$ is a solvable a Lie subalgebra of $End(V)$ by
Cartan's criterion. Thanks to Proposition \ref{prop151}, $\mathfrak{W}$ is
solvable and hence $\mathfrak{W}=Rad(L)=\Ess\left(L\right)$.

%\bigskip{}

%%%%%%%%%%%%%%%%%%%%%%%%%%%%%%%%%%%%%%%%%%%%%%%%%%%%%%%%%%%%%%%%%%%%%%%%
%%%%%%%%%%%%%%%%%%%%%%%%%%%%%%%%%%%%%%%%%%%%%%%%%%%%%%%%%%%%%%%%%%%%%%%%%%%%%%

Conversely,\\
suppose $L$ is not semisimple and so has a solvable ideal such that $a\supsetneq\Ess(L)\supseteq a^{2}$ by Remark \ref{ari}.
%
 %\\
Let us show that $(x,y)_{{K}}=0$ for all $x$ in $a$,
$y$ in $L$ and then $a\subset\ker(\mathfrak{K})$.\\ 
Let $\sigma=ad_{x}\circ ad_{y}$. 

%\medskip{}

By assumption % 
$tr(\sigma_{|\Ess(L)})=0$.

And since $\sigma$ maps $L$ to $a$, $a$
to $a^{2}$ and  
$a^{2}\subseteq\Ess\left(L\right)$, we have that 
\[
\sigma^{2}\left(L\right)\subseteq\sigma\left(a\right)\subseteq a^{2}\subseteq%
\Ess\left(L\right).
\]
Write $L=\Ess\left(L\right)\oplus L_2$. Then we have by Lemma \ref{tras}, that 
%Then $\sigma$ is nilpotent and 
$tr(\sigma)=tr(\sigma_{|\Ess(L)})=0$. Hence if $L$ is
not semisimple then the kernel of its Killing form satisfies $\Ess(L)\subsetneq\ker({\mathfrak{K}})$.
%Conversely 
\end{proof}

\begin{remark}
 I. Demir et al. give another proof of this theorem :
Leibniz algebra is semisimple implies the Killing form is non degenerate. (see \cite[Theorem 5.8]{demirs}).
\end{remark}

\section{Conclusion}
%%% Ce que l'on a obtenu ; Puis des questions diverses soulevees.
Let us cite an example of Leibniz algebra which is solvable and the kernel of it's Killing form is $\Ess(L)$.
\begin{example}\label{examp}\cite{Masons} %

Let $L=\mathbb{C}x+\mathbb{C}y$ be the two dimensional complex Leibniz
algebra which generators satisfy $\left[x,x\right]=\left[y,y\right]=\left[y,x\right]=0;\left[x,y\right]=x$.
\end{example}

Let us find the kernel of the Killing form of the non lie leibniz algebra $L=Fx\oplus Fy$ defined in Example \ref{examp}. 
Let $a=a_{11}x+a_{12}y$ and $b=a_{21}x+a_{22}y$ be two elements of algebra. The matrix of the endomorphism $ad_a$  is 
$\begin{pmatrix}
                                                                                                                                                                                                                                 a_{12}&0\\
0&0
                                                                                                                                                                                                                           \end{pmatrix}$
 and
the matrix of the endomorphism $ad_b$  is  $\begin{pmatrix}
               a_{22}&0\\
		0&0
              \end{pmatrix}$.

Then the Killing form is defined by $(a,b)_{\mathfrak{K}}=a_{12}a_{22}$ for all $a,b$ in $L$.

Since $\Ess(L)=\{0\}$ for any Lie algebra; Lie algebras are Killing Leibniz algebras   and %in class of 
the Theorem \ref{sol1} is knowned for Lie algebras (cf. \cite{Hymph}).\\ 
"Left central Leibniz" are also Killing Leibniz algebras.\\
 Example \ref{examp}  is an algebra not in a class of 
Killing Leibniz algebras.

We claim that
 
\textbf{Claim:}
The class of Leibniz algebras of type W-L-A is a widest class wich satisfies  Theorem \ref{sol1}.
%Find the largest  class of  leibniz algebras which satisfies the Theorem \ref{prob1}.

In \cite{Masons}, the authors call an algebra that is both a left and right Leibniz algebra a symmetric Leibniz algebra. % 
they call L a left central Leibniz algebra if it is a left Leibniz algebra that also satisfies $[[a,a],b] = 0, a\in L, b\in L$.
%Equivalently, M is both the left and right centralizer of C(M). 
There is a hierarchy of algebras
\\
$\{left Leibniz\} \supsetneq \{left central Leibniz\} \supsetneq \{symmetric Leibniz\} \supsetneq \{Lie\}$.

We call a right central Leibniz algebra if it is a right Leibniz algebra that also satisfies $[b,[a,a]] = 0, a\in L, b\in L$ ; and there is a hierarchy of algebras
\\
$\{right Leibniz\} \supsetneq \{right central Leibniz\} \supsetneq \{symmetric Leibniz\} \supsetneq \{Lie\}$.

So we can complete the  hierarchy of Leibniz algebras  as 
\\
$\{left Leibniz\} \supsetneq \{left Killing Leibniz\} \supsetneq \{left central Leibniz\} \supsetneq \{symmetric Leibniz\} \supsetneq \{Lie\}$.
\\
and
\\
$\{right Leibniz\} \supsetneq \{right Killing Leibniz\}  \supsetneq \{right central Leibniz\} \supsetneq \{symmetric Leibniz\} \supsetneq \{Lie\}$.
%\'E\hspace{5cm}

\textbf{Questions:}
\begin{itemize}
  \item Can we prove the Weyl's theorem on complete reducibility for Killing Leibniz Algebras?
  \item In \cite{Masons}, the authors show that "left central Leibniz algebras" are one class of algebras which satisfies version of the Malcev theorem. Is it also true for %a class
 Killing Leibniz Algebras?
\end{itemize}

\LastPageEnding

\end{document}